\newcommand{\vp}{\varepsilon}
\theoremstyle{plain}
\newtheorem{thm}{Theorem}
\newtheorem*{unthm}{Theorem}
\newtheorem{cor}{Corollary}
\newtheorem{pro}{Proposition}
\newtheorem*{unpro}{Proposition}
\theoremstyle{definition}
\newtheorem{defn}{Definition}
\theoremstyle{remark}
\begin{document}

\title{On a class of stable conditional measures}

\author{Eugen Mihailescu}

\date{}
\maketitle

\begin{abstract}
The dynamics of endomorphisms (smooth non-invertible maps)
presents many differences from that of diffeomorphisms or that of
expanding maps;  most methods from those cases do not work if the
map has a basic set of saddle type with self-intersections. In
this paper we study the conditional measures of a certain class of
equilibrium measures, corresponding to a measurable partition
subordinated to local stable manifolds. We show that these
conditional measures are geometric probabilities on the local
stable manifolds, thus answering in particular the questions
related to the stable pointwise Hausdorff and box dimensions.
These stable conditional measures are shown to be absolutely
continuous if and only if the respective basic set is a
non-invertible repellor. \ We find also invariant measures of
maximal stable dimension, on folded basic sets. Examples are given
too, for such non-reversible systems.
\end{abstract}

\textbf{MSC 2000:} Primary: 37D35, 37B25, 34C45. Secondary: 37D20.

\textbf{Keywords:} Equilibrium measures for hyperbolic
non-invertible maps, stable manifolds, conditional measures,
folded repellors, pointwise dimensions of measures.

\section{Background and outline of the paper.}

 In this paper we will study non-invertible smooth (say
 $\mathcal{C}^2$) maps on a Riemannian manifold $M$, called \textit{endomorphisms},
which are uniformly hyperbolic on a basic set $\Lambda$. Here by
\textit{basic set} for an endomorphism $f:M \to M$, we understand
a compact topologically-transitive set $\Lambda$, which has a
neighbourhood $U$ such that $\Lambda = \mathop{\cap}\limits_{n \in
\mathbb Z} f^n(U)$.

Considering non-invertible transformations makes sense from the
point of view of applications, since the evolution of a
non-reversible physical system is usually given by a
time-dependent differential equation $\frac{dx(t)}{dt} = F(x(t))$
whose solution, the flow $(f^t)_t$, may not consist necessarily of
diffeomorphisms. However if we look at the ergodic (qualitative)
properties of the associated flow (equilibrium measures, Lyapunov
exponents, conditional measures associated to measurable
partitions), we may replace it with a discrete non-invertible
dynamical system (\cite{ER}). The theory of hyperbolic
diffeomorphisms (Axiom A) has been studied by many authors (see
for example \cite{Bo}, \cite{ER}, \cite{KH}, \cite{Ru-carte}, and
the references therein); also the theory of expanding maps was
studied extensively (see for instance \cite{Ru-exp}), and the fact
that the local inverse iterates are contracting on small balls is
crucial in that case.

 However, the
theory of smooth non-invertible maps which have saddle basic sets,
is significantly different from the two above mentioned cases.
Most methods of proof from diffeomorphisms or expanding maps, do
not work here due to the complicated \textbf{overlappings and
foldings} that the endomorphism may have in the
 basic set $\Lambda$. The unstable manifolds depend in general on the \textbf{choice of a sequence} of
consecutive preimages, not only on the initial point (as in the
case of
 diffeomorphisms). So the unstable manifolds do not form a
 foliation, instead they may intersect each other both inside and
 outside $\Lambda$.
Moreover the local inverse iterates do not contract necessarily on
small balls, instead they will grow exponentially (at least for
some time) in the stable direction. Also, an arbitrary basic set
$\Lambda$ is not necessarily totally invariant for $f$, and there
do not always exist Markov partitions on $\Lambda$. We mention
also that endomorphisms on Lebesgue spaces behave differently than
invertible transformations even from the point of view of
classifications in ergodic theory, see \cite{PW}.

 We will work in the sequel with a hyperbolic endomorphism $f$ on
a basic set $\Lambda$; such a set is also called a \textit{folded
basic set} (or a basic set with \textit{self-intersections}). By
\textit{$n$-preimage} of a point $x$ we mean a point $y$ such that
$f^n(y) = x$.  By \textit{prehistory} of $x$ we understand a
sequence of consecutive preimages of $x$, belonging to $\Lambda$,
and denoted by $\hat x = (x, x_{-1}, x_{-2}, \ldots)$ where
$f(x_{-n}) = x_{-n+1}, n
>0$, with $x_0 = x$. And by \textit{inverse limit} of $(f, \Lambda)$ we mean the space of
all such prehistories, denoted by $\hat \Lambda$. For more about
these aspects, see \cite{Ro}, \cite{M-DCDS06}. By the definition
of a basic set $\Lambda$, we assume that $f$ is
\textit{topologically transitive} on $\Lambda$ as an endomorphism,
i.e that there exists a point in $\Lambda$ whose iterates are
dense in $\Lambda$.

\textit{Hyperbolicity} is defined for endomorphisms (see
\cite{Ru-carte}) similarly as for diffeomorphisms, with the
crucial difference that now the unstable spaces (and thus the
local unstable manifolds) depend on whole prehistories; so we have
the stable tangent spaces $E^s_x, x \in \Lambda$, the unstable
tangent spaces $E^u_{\hat x}, \hat x \in \hat \Lambda$, the
\textit{local stable manifolds} $W^s_r(x), x \in \Lambda$ and the
\textit{local unstable manifolds} $W^u_r(\hat x), \hat x \in \hat
\Lambda$. As there may be (infinitely) many unstable manifolds
going through a point, we do not have here a well defined holonomy
map between stable manifolds, by contrast to the diffeomorphism
case. For more details on endomorphisms, see \cite{Ru-carte},
\cite{PDL}, \cite{M-DCDS06}, \cite{MU-CJM}, etc.

\begin{defn}\label{stable}
Consider a smooth (say $\mathcal{C}^2$) non-invertible map $f$
which is hyperbolic on the basic set $\Lambda$, such that the
critical set of $f$ does not intersect $\Lambda$. Define the
\textit{stable potential} of $f$ as $\Phi^s(y):= \log |Df_s(y)|, y
\in \Lambda$. By \textit{stable dimension} (at a point $x \in
\Lambda$) we understand the Hausdorff dimension $\delta^s(x):=
HD(W^s_r(x) \cap \Lambda)$. We will also say that $f$ is
\textit{c-hyperbolic} on $\Lambda$ if $f$ is hyperbolic on
$\Lambda$, there are no critical points of $f$ in $\Lambda$ and
$f$ is conformal on the local stable manifolds.
\end{defn}

The relations between thermodynamic formalism and the dynamics of
diffeomorphisms or expanding maps form a rich field (see for
instance \cite{Ba}, \cite{Bo}, \cite{ER}, \cite{LY},
\cite{Ru-exp}, etc.) And in \cite{M-Cam}, \cite{MU-CJM},
\cite{MU-DCDS08}, we studied some aspects of the thermodynamic
formalism for non-invertible smooth maps.

\textbf{Examples} of hyperbolic endomorphisms are numerous, for
instance hyperbolic solenoids and horseshoes with
self-intersections (\cite{Bot}), polynomial maps in higher
dimension hyperbolic on certain basic sets, skew products with
overlaps in their fibers (\cite{MU-DCDS08}), hyperbolic toral
endomorphisms or perturbations of these, etc. $\hfill\square$

In this non-invertible setting, a special importance is presented
by \textit{constant-to-one endomorphisms}. For such endomorphisms,
we study the family of conditional measures of a certain
equilibrium measure, family associated to a measurable partition
subordinated to local stable manifolds.

If a \textbf{topological condition} is satisfied, namely if the
number of preimages remaining in $\Lambda$ is constant along
$\Lambda$, we showed in \cite{MU-CJM} the following:

\begin{unthm}[Independence of stable dimension]
If the endomorphism $f$ is c-hyperbolic on the basic set $\Lambda$
(see Definition \ref{stable}) and if the number of $f$-preimages
of any point from $\Lambda$, remaining in $\Lambda$ is constant
and equal to $d$, then the stable dimension $\delta^s(x)$ is equal
to the unique zero $t^s_d$ of the pressure function $t \to
P(t\Phi^s - \log d)$, for any $x \in \Lambda$. The common value of
the stable dimension along $\Lambda$ will be denoted by
$\delta^s$.
\end{unthm}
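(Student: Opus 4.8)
The plan is to reduce $HD\big(W^s_r(x)\cap\Lambda\big)$ to a Bowen-type pressure equation and then identify that equation with $P(t\Phi^s-\log d)=0$, the normalization by $\log d$ being forced by the $d$-to-$1$ combinatorics of the prehistories. First I would fix $x\in\Lambda$ and record the geometry of the local stable slice. Since the critical set of $f$ misses the compact set $\Lambda$, the map $f$ has well-defined local inverse branches on a uniform neighbourhood of $\Lambda$; by hypothesis exactly $d$ preimages of any point of $\Lambda$ lie in $\Lambda$, so each finite prehistory $(x,x_{-1},\dots,x_{-n})$ determines a branch, and one checks that the preimage tree of a point of $\Lambda$ is a genuine $d$-ary tree (distinct strings give distinct points, because a point has only one $f$-image). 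Each $f^n$ carries $W^s_r(x_{-n})\cap\Lambda$ into $W^s_r(x)\cap\Lambda$; because $f$ is hyperbolic this is a contraction, and because $f$ is $c$-hyperbolic it is conformal on stable leaves, so by the $\mathcal C^2$ bounded-distortion (Koebe-type) estimate its contraction ratio is comparable, up to a constant independent of $n$ and of the prehistory, to $\exp\big(S_n\Phi^s(x_{-n})\big)$, where $S_n\Phi^s=\sum_{j=0}^{n-1}\Phi^s\circ f^j$. Thus $W^s_r(x)\cap\Lambda=\bigcup f^n\big(W^s_r(x_{-n})\cap\Lambda\big)$ is a union of $d^n$ pieces of diameters $\asymp\exp\big(S_n\Phi^s(x_{-n})\big)$; the subtlety is that, because of folding, these pieces need not be pairwise disjoint.

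Next I would set up the thermodynamic side. As $\Phi^s<0$ on the compact set $\Lambda$, the function $t\mapsto P(t\Phi^s-\log d)=P(t\Phi^s)-\log d$ is finite, continuous and strictly decreasing; at $t=0$ it equals $h_{\mathrm{top}}(f|_\Lambda)-\log d\ge 0$ (the inequality because $f$ is $d$-to-$1$ onto $\Lambda$), and it tends to $-\infty$ as $t\to+\infty$, so the zero $t^s_d$ exists and is unique. Since $f|_\Lambda$ is, through its inverse limit, a topologically transitive hyperbolic system and $\Phi^s$ is Hölder, the potential $\psi:=t^s_d\Phi^s-\log d$ has a unique equilibrium state $\nu$, with the Gibbs property; crucially $P(\psi)=0$, so the $\nu$-mass of an $(n,\varepsilon)$-Bowen ball at $z$ is $\asymp\exp\big(S_n\psi(z)\big)$ and, summing over preimages, $\sum_{f^nz=y,\ z\in\Lambda}\exp\big(S_n\psi(z)\big)\asymp 1$ — this is the point at which the normalization by $\log d$ is used. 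Let $\mu$ be the family of conditional measures of $\nu$ on local stable manifolds, associated to a measurable partition subordinate to $W^s$. The heart of the proof is a Volume Lemma: for $\nu$-a.e. $y$, and then by transitivity and continuity of the construction uniformly over the slices,
\[
\mu\big(B(y,\rho)\cap W^s_r(y)\big)\ \asymp\ \rho^{\,t^s_d}.
\]
Granting this, $HD\big(W^s_r(x)\cap\Lambda\big)\ge\dim_H\mu=t^s_d$ from the lower estimate, while the upper estimate $\mu\big(B(y,\rho)\cap W^s_r(y)\big)\gtrsim\rho^{t^s_d}$ lets one extract a cover of the slice by $\lesssim\rho^{-t^s_d}$ balls of radius $\rho$, giving $HD\big(W^s_r(x)\cap\Lambda\big)\le t^s_d$. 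Hence $\delta^s(x)=t^s_d$.

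To prove the Volume Lemma I would combine three ingredients: the Gibbs bounds for $\nu$ together with $P(\psi)=0$, controlling how the conditionals of $\nu$ distribute among the $d^n$ prehistory-pieces; the transformation rule for $\mu$ under the inverse branches, so that the $\mu$-mass of the piece $f^n\big(W^s_r(x_{-n})\cap\Lambda\big)$ is governed by $\exp\big(S_n\psi(x_{-n})\big)$ renormalized by the analogous sum over all length-$n$ prehistories, which is $\asymp 1$; and the bounded-distortion estimate from the first step, used to pass between $n$-th stable pieces and honest $\rho$-balls with $\rho\asymp\exp\big(S_n\Phi^s\big)$. The independence of the value from the basepoint is then quick: $f^n$ restricted to a local stable manifold near a preimage is a conformal diffeomorphism onto its image, hence bi-Lipschitz on compacta and dimension-preserving, so all stable slices over preimages of a point carry the same Hausdorff dimension; choosing (by transitivity) a point with dense forward orbit and using the continuous dependence of $W^s_r(\cdot)$ and of the local structure of $\Lambda$, one propagates the common value $t^s_d$ to $\delta^s(x)$ for every $x\in\Lambda$.

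The step I expect to be the main obstacle is precisely the lower estimate $\mu\big(B(y,\rho)\cap W^s_r(y)\big)\gtrsim\rho^{t^s_d}$ in the presence of self-intersections: because the endomorphism may fold distinct local stable slices onto one another, the $d^n$ prehistory-pieces covering a slice may overlap substantially, so the dimension cannot be read off from a symbolic coding, and a naive covering argument only yields the generally non-optimal bound given by the zero of $P(t\Phi^s)$. What saves the estimate is that $\nu$ is a genuine equilibrium state, so its stable conditionals spread their mass as evenly as the thermodynamics permits and the overlaps do not concentrate $\mu$-mass; making this quantitative — equivalently, showing that the number of distinguishable stable $n$-pieces grows exponentially with rate exactly $h_{\mathrm{top}}(f|_\Lambda)-\log d$ — is where the hypotheses enter decisively, $c$-hyperbolicity providing the conformal bounded distortion and the constant $d$-to-$1$ condition making the prehistory combinatorics a full $d$-shift, which pins the correction term down to $\log d$ rather than merely an inequality.
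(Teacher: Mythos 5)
The paper does not prove this theorem; it is cited verbatim from \cite{MU-CJM}, where the argument goes through \emph{inverse pressure} (a covering quantity tailored to the preimage structure) rather than equilibrium states and conditional measures. Your route is genuinely different: you propose to first establish a Volume Lemma for the conditionals of the equilibrium state $\nu$ of $t^s_d\Phi^s-\log d$ on stable slices, and read the dimension off from it. That reverses the logical order of the present paper, which \emph{starts} from the independence theorem (so that $P(\delta^s\Phi^s)=\log d$) and only then proves, as its main new result (Theorem \ref{main} via Proposition \ref{pieces}), that the stable conditionals of $\mu_s$ are geometric of exponent $\delta^s$. The reversal is not circular, since the pressure relation you need is just the defining equation for $t^s_d$; and the Gibbs-ball/preimage bookkeeping you sketch (the $d^n$ disjoint preimage components giving $\sum_{f^nz=y}e^{S_n\psi(z)}\asymp1$) is correct and close in spirit to Proposition \ref{pieces}. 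But the Volume Lemma you assert is, modulo notation, exactly Theorem \ref{main}, which the author needs several pages and a delicate "desintegrate-and-recombine" argument over $\rho$-maximal prehistories to justify; your three-ingredient sketch elides precisely the overlap problem you yourself flag as the main obstacle, so as written it is a restatement of the difficulty rather than a resolution of it.

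There is also a more structural gap. Conditional measures are defined only mod $\mu$-null sets, so the Volume Lemma delivers $\dim_H\big(W^s_r(x)\cap\Lambda\big)=t^s_d$ only for $\nu$-a.e.\ $x$ (and, for the Frostman upper bound, you additionally need the lower Gibbs inequality to hold at \emph{every} point of the slice, which requires showing the conditional has full support on $W^s_r(x)\cap\Lambda$). The theorem, however, asserts the equality for \emph{every} $x\in\Lambda$, and your propagation argument does not close this gap: $f^n$ does map a stable slice bi-Lipschitzly into a slice over $f^n(x)$, but it need not be onto $W^s_{r'}(f^n x)\cap\Lambda$, giving only one inequality; and moving from $f^n(p)$ (for a dense-orbit point $p$) to a nearby $x$ requires a stable holonomy, which for endomorphisms is exactly the tool one cannot freely use because local unstable manifolds are not unique (they depend on the whole prehistory), and the available holonomies are only Hölder, not bi-Lipschitz, so they do not preserve Hausdorff dimension. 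Appealing to "continuous dependence of $W^s_r(\cdot)$ and of the local structure of $\Lambda$" is not enough — Hausdorff dimension is not continuous in this data. This pointwise uniformity is precisely what the inverse-pressure approach of \cite{MU-CJM} is designed to deliver, and your proposal would need a substitute mechanism.
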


In fact if $f$ is \textbf{open} on $\Lambda$, we proved (see
\cite{MU-CJM}, and Proposition 1 of \cite{M-Cam}) the following:

\begin{unpro}[\cite{M-Cam}, \cite{MU-CJM}]
Let an endomorphism $f:M \to M$ which has a basic set $\Lambda$,
disjoint from the critical set of $f$. Assume that $\Lambda$ is
connected and $f|_\Lambda: \Lambda \to \Lambda$ is open. Then the
cardinality of the set $f^{-1}(x) \cap \Lambda$ is constant, when
$x$ ranges  in $\Lambda$.
\end{unpro}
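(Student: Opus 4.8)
The plan is to prove that the integer-valued function $d\colon\Lambda\to\mathbb{N}$ defined by $d(x):=\#\bigl(f^{-1}(x)\cap\Lambda\bigr)$ is locally constant, and then conclude by the connectedness of $\Lambda$. First I would check that $d(x)$ is finite for every $x\in\Lambda$: since the critical set of $f$ is disjoint from $\Lambda$, the map $f$ is a local diffeomorphism at each point of $\Lambda$, so $f^{-1}(x)\cap\Lambda$ is a discrete closed subset of the compact set $\Lambda$, hence finite. Writing $f^{-1}(x)\cap\Lambda=\{y_1,\dots,y_k\}$, I would then fix pairwise disjoint open sets $V_1,\dots,V_k\subset M$ with $y_i\in V_i$ such that each restriction $f|_{V_i}\colon V_i\to f(V_i)$ is a diffeomorphism onto an open neighbourhood of $x$; this is possible because the $y_i$ are finitely many, distinct, and regular points of $f$.

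Next I would establish upper semicontinuity of $d$ near $x$. The claim is that there is a neighbourhood $N$ of $x$ in $M$ with $f^{-1}(N)\cap\Lambda\subset V_1\cup\dots\cup V_k$. If not, one finds $x'_n\to x$ in $\Lambda$ and $z_n\in f^{-1}(x'_n)\cap\Lambda$ with $z_n\notin V_1\cup\dots\cup V_k$; by compactness of $\Lambda$ a subsequence of $(z_n)$ converges to some $z\in\Lambda\setminus(V_1\cup\dots\cup V_k)$ with $f(z)=x$, contradicting $f^{-1}(x)\cap\Lambda\subset V_1\cup\dots\cup V_k$. Given such an $N$, for every $x'\in N\cap\Lambda$ each $\Lambda$-preimage of $x'$ lies in some $V_i$, and since $f|_{V_i}$ is injective there is at most one preimage in each $V_i$; hence $d(x')\le k=d(x)$.

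For lower semicontinuity I would use the openness hypothesis. Each set $V_i\cap\Lambda$ is open in $\Lambda$ and contains $y_i$, so by openness of $f|_\Lambda$ the image $f(V_i\cap\Lambda)$ is open in $\Lambda$ and contains $x$; therefore $W:=\bigcap_{i=1}^k f(V_i\cap\Lambda)$ is an open neighbourhood of $x$ in $\Lambda$. For any $x'\in W$ and each $i$ there is a point of $V_i\cap\Lambda$ mapping onto $x'$, and these $k$ points are pairwise distinct because the $V_i$ are disjoint; hence $d(x')\ge k=d(x)$. Combining the two estimates on $N\cap W\cap\Lambda$ gives $d\equiv d(x)$ there, so $d$ is locally constant on $\Lambda$; since $\Lambda$ is connected, $d$ is constant, which is the assertion.

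The step I expect to be the real obstacle --- and the only place where the hypothesis is genuinely needed --- is the lower semicontinuity: a priori, under a small perturbation of the base point some of the preimages could leave $\Lambda$, so that $d$ drops. Openness of $f|_\Lambda$ is precisely what forbids this, because it forces each local inverse branch of $f$ at $y_i$ to carry an entire neighbourhood of $x$ in $\Lambda$ back into $\Lambda$. The finiteness of $d$ and its upper semicontinuity, by contrast, only use compactness of $\Lambda$ and the absence of critical points in $\Lambda$, not openness or connectedness.
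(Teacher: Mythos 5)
Your argument is correct, and it is the natural proof: upper semicontinuity of $x\mapsto\#(f^{-1}(x)\cap\Lambda)$ from compactness of $\Lambda$ and absence of critical points, lower semicontinuity from openness of $f|_\Lambda$, hence local constancy, hence constancy by connectedness. The paper states this proposition only as a citation to \cite{M-Cam} and \cite{MU-CJM} without reproducing a proof, so there is no internal argument to compare against, but your route is the expected one (note, for completeness, that in the compactness step one uses forward-invariance $f(\Lambda)\subset\Lambda$, which holds by definition of a basic set, to guarantee the limit point $z$ has $f(z)=x\in\Lambda$).
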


Examples of hyperbolic open endomorphisms on saddle sets are given
in the end of the paper.

\begin{defn}\label{stable-measure}
Let an endomorphism $f$ c-hyperbolic on the basic set $\Lambda$,
such that the number of $f$-preimages of any point from $\Lambda$,
remaining in $\Lambda$, is constant and equal to $d$. Then we call
the equilibrium measure of  $\delta^s \cdot \Phi^s$, the
\textit{stable equilibrium measure} of $f$ on $\Lambda$, and
denote it by $\mu_s$.
\end{defn}

We notice that, since the stable foliation is Lipschitz continuous
for endomorphisms (see \cite{MU-CJM}), the potential $\delta^s
\cdot \Phi^s$ is Holder continuous; thus it can be shown by
lifting the measure to the inverse limit $\hat \Lambda$, that
there exists a unique equilibrium measure $\mu_s$ of $\delta^s
\cdot \Phi^s$ (we can apply the results for homeomorphisms from
\cite{KH} on the inverse limit $\hat \Lambda$, in order to get the
uniqueness).

We will show in Theorem \ref{main} that if the number of
$f$-preimages in $\Lambda$ is constant, then the
\textbf{conditional measures} of $\mu_s$ associated to a
measurable partition subordinated to the local stable manifolds,
are \textbf{geometric probabilities} of exponent $\delta^s$. This
will answer then in Corollary \ref{pointwise} the question of the
pointwise Hausdorff dimension and the pointwise box dimension of
the equilibrium measure $\mu_s$ on local stable manifolds (see for
instance \cite{Ba} for definitions). In the constant-to-1
non-invertible case, we show in particular in Corollary
\ref{maximal} that these stable conditional measures are measures
of \textbf{maximal dimension} (in the sense of \cite{BW}) on the
intersections of local stable manifolds with the folded basic set
$\Lambda$.

Our approach will be different both from the case of
diffeomorphisms and from that of expanding maps. In Proposition
\ref{pieces} (which is the main ingredient for the proof of
Theorem \ref{main}), we compare the equilibrium measure on various
different components of the preimage set of a small "cylinder"
around an unstable manifold. We will have to carefully estimate
the equilibrium measure $\mu_s$ on the different pieces of the
iterates of Bowen balls, in order to get good estimates for the
cylinders around local unstable manifolds, $B(W^u_r(\hat x),
\vp)$. This will be done by a process of \textbf{desintegrating}
the measure on the various components of the preimages of borelian
sets, and then by successive re-combinations. Thus we will
reobtain the measure $\mu_s$ on an arbitrary open set, and then
will use the essential uniqueness of the family of conditional
measures of $\mu_s$; for background on conditional measures
associated to measurable partitions on Lebesgue spaces, see
\cite{Ro}.

In Corollary \ref{absolute} we prove that the conditional measures
of $\mu_s$ on the local stable manifolds over $\Lambda$ are
\textbf{absolutely continuous} if and only if, the stable
dimension is equal to the real dimension of the stable tangent
space $\text{dim} E^s_x$; \ and we show that this is equivalent to
$\Lambda$ being a folded repellor.

 We will also give in the end Examples of hyperbolic
constant-to-1 \textbf{folded basic sets} for which Theorem
\ref{main} and its Corollaries do apply. In particular we provide
examples of folded repellors obtained for perturbation
endomorphisms, which are not Anosov and for which we prove the
absolute continuity of the stable conditional measures on their
(non-linear) stable manifolds.

\section{Main proofs and applications.}

For our first result we assume only that $f$ is a smooth
endomorphism which is hyperbolic on a basic set $\Lambda$. We will
give a comparison between the values of an arbitrary equilibrium
measure $\mu_\phi$ (corresponding to a Holder continuous potential
$\phi$ on $\Lambda$) on the different pieces/components of the
preimages of a borelian set; this will be useful when we will
estimate later on, the measure $\mu_s$ on certain sets.

By a \textit{Bowen ball} $B_n(x, \vp)$ we understand the set $\{y
\in \Lambda, d(f^iy, f^i x) < \vp, i = 0, \ldots, n\}$, for $x \in
\Lambda$ and $n > 1$. If $\phi$ is a continuous real function on
$\Lambda$ and $m$ is a positive integer, we denote by $S_m
\phi(y):= \phi(y) + \phi(f(y)) + \ldots + \phi(f^m(y))$ the
consecutive sum of $\phi$ on the $n$-orbit of $y \in \Lambda$. And
by $P(\phi)$ we denote the \textit{topological pressure} of the
potential $\phi$ with respect to the function $f|_\Lambda$.

\begin{pro}\label{pieces}

Let $f$ be an endomorphism, hyperbolic on a basic set $\Lambda$;
consider also a Holder continuous potential $\phi$ on $\Lambda$
and $\mu_\phi$ be the unique equilibrium measure of $\phi$. Let a
small $\vp>0$, two disjoint Bowen balls $B_k(y_1, \vp), B_m(y_2,
\vp)$ and a borelian set $A \subset f^k(B_k(y_1, \vp)) \cap
f^m(B_m(y_2, \vp))$, s.t $\mu_\phi(A) >0$; denote by $A_1:=
f^{-k}A \cap B_k(y_1, \vp), A_2 := f^{-m}A \cap B_m(y_2, \vp)$.
Then there exists a positive constant $C_\vp$ independent of $k,
m, y_1, y_2$ such that $$ \frac{1}{C_\vp} \mu_\phi(A_2) \cdot
\frac{e^{S_k \phi(y_1)}}{e^{S_m\phi(y_2)}} \cdot P(\phi)^{m-k} \le
\mu_\phi(A_1) \le C_\vp \mu_\phi(A_2) \cdot
\frac{e^{S_k\phi(y_1)}}{e^{S_m\phi(y_2)}} \cdot P(\phi)^{m-k}$$
\end{pro}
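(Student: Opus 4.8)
\n\textbf{Proof plan.} The plan is to reduce the inequality to a single Gibbs-type estimate for $\mu_\phi$ along a forward Bowen ball, applied once to $(k,y_1)$ and once to $(m,y_2)$, after first arranging that $A_1$ and $A_2$ are mapped bijectively onto $A$ by $f^k$ and $f^m$. The first step is to shrink $\vp$ so that the two Bowen balls become ``invertible blocks'': since the critical set of $f$ is disjoint from the compact set $\Lambda$, there is $\vp_0>0$ (depending only on $f$ and $\Lambda$) such that $f$ is a diffeomorphism on the ball $B(x,2\vp_0)$ for every $x\in\Lambda$, and we henceforth assume $\vp<\vp_0$. A descent along the orbit then shows that $f^k$ is injective on $B_k(y_1,\vp)$ and $f^m$ is injective on $B_m(y_2,\vp)$: if $z,z'\in B_k(y_1,\vp)$ and $f^kz=f^kz'$, then $f^{k-1}z$ and $f^{k-1}z'$ both lie in $B(f^{k-1}y_1,\vp)\subset B(f^{k-1}y_1,2\vp_0)$, a ball on which $f$ is injective, so $f^{k-1}z=f^{k-1}z'$, and one continues down to $z=z'$. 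Since $A\subset f^k(B_k(y_1,\vp))\cap f^m(B_m(y_2,\vp))$, it follows that $f^k$ maps $A_1=f^{-k}A\cap B_k(y_1,\vp)$ bijectively onto $A$ and $f^m$ maps $A_2=f^{-m}A\cap B_m(y_2,\vp)$ bijectively onto $A$.

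The engine of the proof is the following Gibbs property of $\mu_\phi$: there is a constant $C_0=C_0(\vp,\phi)\ge 1$ so that for every $n\ge 1$, every $x\in\Lambda$ and every borelian $E\subset f^n(B_n(x,\vp))$,
$$ \frac{1}{C_0}\,\mu_\phi(E)\,e^{S_n\phi(x)}\,P(\phi)^{-n}\ \le\ \mu_\phi\big(f^{-n}E\cap B_n(x,\vp)\big)\ \le\ C_0\,\mu_\phi(E)\,e^{S_n\phi(x)}\,P(\phi)^{-n}. $$
Granting this, the Proposition follows at once: applying the estimate to $(n,x,E)=(k,y_1,A)$ gives $\frac1{C_0}\mu_\phi(A)e^{S_k\phi(y_1)}P(\phi)^{-k}\le\mu_\phi(A_1)\le C_0\mu_\phi(A)e^{S_k\phi(y_1)}P(\phi)^{-k}$ (so in particular $\mu_\phi(A_1)>0$, because $\mu_\phi(A)>0$), while applying it to $(n,x,E)=(m,y_2,A)$ gives the same two-sided bound for $\mu_\phi(A_2)$ with $(m,y_2)$ replacing $(k,y_1)$; combining these two two-sided bounds and cancelling the common factor $\mu_\phi(A)$ produces
$$ \frac{1}{C_0^{2}}\,\mu_\phi(A_2)\,\frac{e^{S_k\phi(y_1)}}{e^{S_m\phi(y_2)}}\,P(\phi)^{m-k}\ \le\ \mu_\phi(A_1)\ \le\ C_0^{2}\,\mu_\phi(A_2)\,\frac{e^{S_k\phi(y_1)}}{e^{S_m\phi(y_2)}}\,P(\phi)^{m-k}, $$
which is the assertion with $C_\vp:=C_0^{2}$, a constant depending only on $\vp$ (and on $f,\Lambda,\phi$), not on $k,m,y_1,y_2$.

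It remains to establish the Gibbs estimate, and this is the step I expect to be the real obstacle, since $f|_\Lambda$ is neither expanding nor invertible, so the classical Ruelle--Perron--Frobenius / conformal-measure arguments are not available on $\Lambda$ itself. Rewriting the estimate as a uniform Jacobian bound --- $J_{\mu_\phi}(f^n)(x)$ being comparable, up to a constant independent of $n$, to $e^{-S_n\phi(x)}P(\phi)^{n}$ --- I would obtain it by lifting $\mu_\phi$ to the unique equilibrium measure $\hat\mu$ of $\hat\phi=\phi\circ\pi$ on the inverse limit $\hat\Lambda$, where the natural extension $\hat f$ is an expansive homeomorphism: there the classical Gibbs inequalities hold (this is the same passage to $\hat\Lambda$ and the same appeal to \cite{KH} that is used above for uniqueness of the equilibrium measure, and it is precisely here that the H\"older continuity of $\phi$ enters), and one then pushes the estimate down through $\pi$. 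The passage from the center $x$ of the Bowen ball to an arbitrary $z\in B_n(x,\vp)$ is absorbed by the standard bounded-distortion property of forward Bowen balls of a hyperbolic set: one has the exponential shadowing bound $d(f^iz,f^ix)\le c\,\vp\,(\lambda^i+\lambda^{n-i})$ for $0\le i\le n$ (with $c$ a constant and $\lambda<1$ a hyperbolicity rate), hence $|S_n\phi(z)-S_n\phi(x)|\le D_\vp$ uniformly in $n$; for an endomorphism this last estimate requires fixing one prehistory of $x$ in order to split the displacements $f^iz-f^ix$ along $E^s_{f^ix}$ and the (prehistory-dependent) unstable direction. The only delicate point throughout is to keep every comparison constant independent of $n$, and hence of $k,m,y_1,y_2$ --- which is exactly what the detour through $\hat\Lambda$ and the bounded-distortion estimate are designed to guarantee.
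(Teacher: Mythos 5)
The argument hinges entirely on the auxiliary ``Gibbs estimate'' you introduce, namely that for every Borel $E\subset f^n(B_n(x,\vp))$ one has $\mu_\phi\bigl(f^{-n}E\cap B_n(x,\vp)\bigr)\asymp\mu_\phi(E)\,e^{S_n\phi(x)-nP(\phi)}$ with a constant independent of $n$ and $x$. This inequality is false on a saddle-type hyperbolic set, and the lift to $\hat\Lambda$ cannot salvage it. The quickest way to see the problem is to specialise to an invertible hyperbolic system (which is still an endomorphism hyperbolic on a basic set, so falls under the hypotheses): there $f^{-n}E\cap B_n(x,\vp)=f^{-n}E$, whose measure equals $\mu_\phi(E)$ by $f$-invariance, so the asserted estimate collapses to $e^{S_n\phi(x)-nP(\phi)}\asymp 1$ uniformly in $n,x$, which fails already for $\phi\equiv 0$ (where $S_n\phi-nP(\phi)=-n\,h_{\mathrm{top}}\to-\infty$). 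Equivalently, in general, taking $E=f^n(B_n(x,\vp))$ and comparing with the genuine Bowen-ball estimate $\mu_\phi(B_n(x,\vp))\asymp e^{S_n\phi(x)-nP(\phi)}$ would force $\mu_\phi\bigl(f^n(B_n(x,\vp))\bigr)\asymp 1$ uniformly in $n$; but $f^n(B_n(x,\vp))$ is a tube around an unstable leaf whose stable width contracts like $\lambda^n$, so its $\mu_\phi$-measure tends to $0$ whenever $\mu_\phi$ is not supported on an unstable leaf. What the passage to the natural extension and bounded distortion \emph{do} give is exactly the Bowen-ball estimate $\mu_\phi(B_n(x,\vp))\asymp e^{S_n\phi(x)-nP(\phi)}$; they do not give a Jacobian-type formula $J_{\mu_\phi}(f^n)\asymp e^{nP(\phi)-S_n\phi}$, because on a saddle set the set $f^n(B_n(x,\vp))$ is not comparable to a fixed-size ball, and the quantity controlling $\mu_\phi(f^{-n}E\cap B_n(x,\vp))$ is an \emph{unstable conditional} measure of $E$, not $\mu_\phi(E)$ itself.

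So the reduction step is sound but the ``engine'' is not, and the proof as written does not establish the Proposition. The paper's actual route is genuinely different and avoids any Jacobian claim: it writes $\mu_\phi$ as the limit of the normalized periodic-orbit measures $\tilde\mu_n=\tfrac1{P(f,\phi,n)}\sum_{x\in\mathrm{Fix}(f^n)}e^{S_n\phi(x)}\delta_x$, uses the Specification property to attach to each $x\in\mathrm{Fix}(f^n)\cap A_1$ a periodic point $z\in\mathrm{Fix}(f^{n+m-k})$ shadowing an $m$-preimage $y\in A_2$ of $f^k x$, shows this correspondence is at most $N_\vp$-to-one, and then uses bounded distortion of Birkhoff sums together with $P(f,\phi,n)\asymp e^{nP(\phi)}$ to compare $\tilde\mu_n(A_1)$ with $\tilde\mu_{n+m-k}$ of a neighbourhood of $A_2$. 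The quantity that cancels between $A_1$ and $A_2$ is therefore not $\mu_\phi(A)$ but the weighted count of long periodic orbits passing near $A$, which is precisely what makes the constant uniform. If you want to pursue a one-ball reduction you would need to replace $\mu_\phi(E)$ in your Gibbs estimate by the unstable conditional measure of a leaf of $E$ (or an equivalent periodic-orbit quantity), and that essentially leads back to the paper's argument.
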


\begin{proof}
 Let us fix a Holder potential $\phi$. We will denote the
equilibrium measure $\mu_\phi$ by $\mu$ to simplify notation. We
will work with $f$ restricted to $\Lambda$.

As in \cite{KH}, since the borelian sets with boundaries of
$\mu$-measure zero form a sufficient collection, we will assume
that each of the sets $A_1, A_2$ have boundaries of $\mu$-measure
zero.

From construction $f^k(A_1) = f^m(A_2)$, and assume for example
that $m \ge k$. Now the equilibrium measure $\mu$ can be
considered as the limit of the sequence of measures (see
\cite{KH}):

$$\tilde \mu_n:= \frac {1}{P(f, \phi, n)} \cdot
\mathop{\sum}\limits_{x \in \text{Fix}(f^n)} e^{S_n\phi(x)}
\delta_x,$$ where $P(f, \phi, n):= \mathop{\sum}\limits_{x \in
\text{Fix}(f^n)} e^{S_n\phi(x)}, n \ge 1$.

So we have
\begin{equation}\label{ai}
\tilde \mu_n(A_1) = \frac {1}{P(f, \phi, n)} \cdot
\mathop{\sum}\limits_{x \in \text{Fix}(f^n) \cap A_1}
e^{S_n\phi(x)}, n \ge 1
\end{equation}

Let us consider now a periodic point $x \in \text{Fix}(f^n) \cap
A_1$; by definition of $A_1$, it follows that $f^k(x) \in A$, so
there exists a point $y \in A_2$ such that $f^m(y) = f^k(x)$.
However the point $y$ does not have to be periodic.

Now we will use the Specification Property (\cite{Bo}, \cite{KH})
on the hyperbolic compact locally maximal set $\Lambda$:  if
$\vp>0$ is fixed, then there exists a constant $M_\vp>0$ such that
for all $n
>> M_\vp$, there exists a $z \in \text{Fix}(f^{n+m-k})$ s.t $z$ $\vp$-shadows
the $(n+m-k-M_\vp)$-orbit of $y$.

Let now $V$ be an arbitrary neighbourhood of the set $A_2$ s.t $V
\subset B_m(y_2, \vp)$. Consider two points $x, \tilde x \in
\text{Fix}(f^n) \cap A_1$ and assume the same periodic point $z
\in V \cap \text{Fix}(f^{n+m-k})$ corresponds to both $x$ and
$\tilde x$ by the above procedure. This means that the
$(n-k-M_\vp)$-orbit of $f^m z$, $\vp$-shadows the
$(n-k-M_\vp)$-orbit of $f^k x$ and also the $(n-k-M_\vp)$-orbit of
$f^k \tilde x$. Hence the $(n-M_\vp-k)$-orbit of $f^k x$,
$2\vp$-shadows the $(n-M_\vp-k)$-orbit of $f^k \tilde x$. But
recall that we chose $x, \tilde x \in A_1 \subset B_k(y_1, \vp)$,
hence $\tilde x \in B_{n-M_\vp}(x, 2\vp)$.

Now we can split the set $B_{n-M_\vp}(x, 2\vp)$ in at most $N_\vp$
smaller Bowen ball of type $B_n(\zeta, 2\vp)$. In each of these
$(n, 2\vp)$-Bowen balls $B_n(\zeta, 2\vp)$ we may have at most one
fixed point for $f^n$. This holds since fixed points for $f^n$ are
solutions to the equation $f^n \xi = \xi$ and, on tangent spaces
we have that $Df^n - Id$ is a linear map without eigenvalues of
absolute value 1. Thus if $d(f^i \xi, f^i \zeta) < 2\vp, i = 0,
\ldots, n$ and if $\vp$ is small enough, we can apply the Inverse
Function Theorem at each step. Therefore there exists only one
fixed point for $f^n$ in each Bowen ball $B_n(\zeta, 2\vp)$. Hence
there exist at most $N_\vp$ periodic points from $\text{Fix}(f^n)
\cap \Lambda$ having the same periodic point $z \in V$ attached to
them by the above procedure.

Let us notice also that, if $x, \tilde x$ have the same point
$z\in V \cap \text{Fix}(f^{n+m-k})$ attached to them, then as
before, $\tilde x \in B_{n-M_\vp}(x, 2\vp)$. So the distances
between iterates are growing exponentially in the unstable
direction, and decrease exponentially in the stable direction.
Thus we can use the Holder continuity of $\phi$ and a Bounded
Distortion Lemma to prove that: $$|S_n\phi(x) - S_n\phi(\tilde x)|
\le \tilde C_\vp,$$ for some positive constant $\tilde C_\vp$
depending on $\phi$ (but independent of $n, x$). This can be used
then in the estimate for $\tilde \mu_n(A_1)$, according to
(\ref{ai}). We use the fact that if $z \in B_{n+m-k-M_\vp}(y,
\vp)$, then $f^m(z) \in B_{n-M_\vp-k}(f^m y, \vp)$; also recall
that $f^kx = f^m y$, so $f^m z \in B_{n-M_\vp-k}(f^k x, \vp)$.
Then from the Holder continuity of $\phi$ and the fact that $x \in
A_1 \subset B_m(y_1, \vp)$, it follows again by a Bounded
Distortion Lemma that there exists a constant $\tilde C_\vp$
(denoted as before without loss of generality) satisfying:
\begin{equation}\label{distortion}
|S_{n+m-k} \phi(z) - S_n\phi(x)| \le |S_k\phi(y_1) - S_m\phi(y_2)|
+ \tilde C_\vp,
\end{equation}
 for $n > n(\vp, m)$.

But from Proposition 20.3.3 of \cite{KH} (which extends
immediately to endomorphisms), we have that there exists a
positive constant $c_\vp$ such that for sufficiently large $n$: $$
\frac{1}{c_\vp}e^{nP(\phi)} \le P(f, \phi, n) \le c_\vp
e^{nP(\phi)}, $$ where the expression $P(f, \phi, n)$ was defined
immediately before (\ref{ai}). Hence in our case, if $n > n(\vp,
m)$ we obtain:
\begin{equation}\label{PX}
\frac{1}{c_\vp}e^{(n+m-k)P(\phi)} \le P(f, \phi, n+m-k) \le c_\vp
e^{(n+m-k)P(\phi)}, \text{and} \ \frac{1}{c_\vp} e^{nP(\phi)} \le
P(f, \phi, n) \le c_\vp e^{nP(\phi)}
\end{equation}

Recall also that there are at most $N_\vp$ points $x \in
\text{Fix}(f^n)$ which have the same attached $z \in V \cap
\text{Fix}(f^n)$. Therefore, by using (\ref{ai}),
(\ref{distortion}) and (\ref{PX}) we can infer that there exists a
constant $C_\vp>0$ such that for $n$ large enough ($n > n(\vp,
m)$),

\begin{equation}\label{ineq1}
\tilde \mu_n(A_1) \le C_\vp \tilde \mu_{n+m-k}(V) \cdot
\frac{e^{S_k \phi(y_1)}}{e^{S_m\phi(y_2)}} \cdot P(\phi)^{m-k},
\end{equation}

where we recall that $A_1 \subset B_m(y_1, \vp), A_2 \subset
B_m(y_2, \vp)$. But since $\partial A_1, \partial A_2$ have
$\mu$-measure zero, we obtain: $$ \mu(A_1) \le C_\vp \mu(V)
\frac{e^{S_k\phi(y_1)}}{e^{S_m\phi(y_2)}} \cdot P(\phi)^{m-k}$$

But $V$ has been chosen arbitrarily as a neighbourhood of $A_2$,
hence $$ \mu(A_1) \le C_\vp \mu(A_2)
\frac{e^{S_k\phi(y_1)}}{e^{S_m\phi(y_2)}} P(\phi)^{m-k} $$
Similarly we prove also the other inequality, hence we are done.

\end{proof}

Let us recall a few notions about measurable partitions (see
\cite{Ro}). Let $\zeta$ be a partition of a Lebesgue space $(X,
\mathcal{B}, \mu)$ with $\mathcal{B}$-measurable sets. Subsets of
$X$ that are unions of elements of $\zeta$ are called
$\zeta$-sets. For an arbitrary point $x\in X$ (modulo $\mu$), we
denote the unique set which contains $x$, by $\zeta(x)$. \ By
\textit{basis} for $\zeta$ we understand a countable collection
$\{B_\alpha, \alpha \in A\}$ of measurable $\zeta$-sets so that
for any two elements $C, C' \in \zeta$, there exists some $\alpha
\in A$ with $C \subset B_\alpha, C' \cap B_\alpha = \emptyset$ or
viceversa, i.e $C \cap B_\alpha = \emptyset, C' \subset B_\alpha$.
A partition $\zeta$ is called \textit{measurable} if it has a
basis as above.

Now we remind briefly the notion of \textit{family of conditional
measures} associated to a measurable partition $\zeta$. Assume we
have an endomorphism $f$ on a compact set $\Lambda$, and let a
probability borelian measure $\mu$ on $\Lambda$ which is
$f$-invariant. If $\zeta$ is a measurable partition of $(\Lambda,
\mathcal{B}, \mu)$ denote by $(\Lambda/\zeta, \mu_\zeta)$ the
\textit{factor space} of $\Lambda$ relative to $\zeta$. Then we
can attach an \textbf{essentially unique} collection of
\textit{conditional measures} $\{\mu_C\}_{C \in \zeta}$ satisfying
two conditions (see \cite{Ro}):

 \ i) $(C, \mu_C)$ is a Lebesgue space

 \ ii) for any measurable set $B \subset \Lambda$, the set $B \cap C$ is measurable in $C$ for $\mu_\zeta$-almost
 all points $C \in \Lambda/\zeta$, the function $C \to \mu_C(B \cap C)$ is
measurable on $\Lambda/\zeta$ and
 $\mu(B) = \int_{\Lambda/\zeta} \mu_C(B \cap C) d \mu_\zeta(C)$.

\begin{defn}\label{sub}
If $f$ is a hyperbolic map on a basic set $\Lambda$ and if $\mu$
is an $f$-invariant borelian measure on $\Lambda$, then a
measurable partition $\zeta$ of $(\Lambda, \mathcal{B}(\Lambda),
\mu)$ is said to be \textbf{subordinated to the local stable
manifolds} if for $\mu$-a. e $x \in \Lambda$, we have $\zeta(x)
\subset W^s_{loc}(x)$, and $\zeta(x)$ contains an open
neighbourhood of $x$ in $W^s_{loc}(x)$ (with respect to the
topology induced on the local stable manifold).
\end{defn}

Let us fix an $f$-invariant borelian measure $\mu$ on $\Lambda$.
Since we work with a uniformly hyperbolic endomorphism, we can
\textbf{construct a measurable partition $\xi$} (w. r. t $\mu$)
subordinated to the local stable manifolds, in the following way:
first we know that there is a small $r_0>0$ s. t for each $x \in
\Lambda$ there exists a local stable manifold $W^s_{r_0}(x)$. Then
it is possible to take a countable partition $\mathcal{P}$ of
$\Lambda$ (modulo $\mu$) with open sets, each having diameter less
than $r_0$ and such that the boundary of each set from
$\mathcal{P}$ has $\mu$-measure zero (see for example \cite{KH}).
Now for every open set $U \in \mathcal{P}$, and $x \in U \subset
\Lambda$, we consider the intersection between $U$ and the unique
local stable manifold going through $x$; denote this intersection
by $\xi(x)$. It is clear that $\xi(x) = \xi(y)$ if and only if
both $x, y$ are in the same set $U \in \mathcal{P}$ and they are
on the same local stable manifold $W^s_{r_0}(z)$ for some $z \in
\Lambda$. Now take the collection $\xi$ of all the borelian sets
$\xi(x), x \in U, U \in \mathcal{P}$. We see easily that $\xi$ is
a partition of $\Lambda$ (modulo sets of $\mu$-measure zero) and
that $\xi$ is measurable, since $\mathcal{P}$ was assumed
countable and, inside each member $U \in \mathcal{P}$, we can
separate any two local stable manifolds with the help of a
countable collection of $\xi$-sets (which are neighbourhoods of
local stable manifolds).  \ Therefore we have concluded the
construction of the measurable partition $\xi$ which is
subordinated to the local stable manifolds. Modulo a set of
$\mu$-measure zero we have thus a partition with pieces of local
stable manifolds, $\xi(x) \subset W^s_{r(y(x))}(y(x)), x \in
\Lambda$. In fact without loss of generality, we may assume that
for each member $A \in \xi$, there exists some $x(A) \in \Lambda$
and $r(A)\in (0, r_0)$ so that $W^s_{r(A)/2}(x(A)) \cap \Lambda
\subset A \subset W^s_{r(A)}(x(A)) \cap \Lambda$.

\textbf{Remark 1:} From the construction above it follows that,
outside a set of $\mu$-measure zero, the radius $r(A)$ can be
taken to vary continuously, i.e there exists a constant $\chi>0$
s. t for each $x$ in a set of full $\mu$-measure in $\Lambda$,
there exists a neighbourhood $U(x)$ of $x$ with
$\frac{r(\xi(z))}{r(\xi(z'))} \le \chi, z, z' \in U(x)$.

$\hfill\square$

\textbf{Notation:} In our uniformly hyperbolic setting, with the
partition $\xi$ constructed above, we denote the conditional
measure $\mu_A$ by $\mu_{A}^s$, for $W^s_{r(A)/2}(x(A)) \cap
\Lambda \subset A \subset W^s_{r(A)}(x(A)) \cap \Lambda, A \in
\xi$. We will also denote the set of centers $\{x(A), A \in \xi\}$
by $S$. In particular, if $\mu = \mu_s$, we denote the conditional
measures by $\mu^s_{s, A}$ for $A \in \xi$, or by $\mu^s_{s, x}$
when $\xi(x) = A$ for $\mu_s$-a.e $x \in \Lambda$. $\hfill\square$

Now, if $f$ is a $d$-to-1 c-hyperbolic endomorphism on the basic
set $\Lambda$, we showed in \cite{MU-CJM} that the stable
dimension $\delta^s(x)$ at \textbf{any point} $x \in \Lambda$ is
independent of $x$, and is equal to the unique zero of the
pressure function $t \to P(t \Phi^s - \log d)$. Thus we can talk
in this case about the \textbf{stable dimension of $\Lambda$} and
will denote it by $\delta^s$.

\begin{thm}\label{main}
Let $f$ be a smooth endomorphism on a Riemannian manifold $M$, and
assume that $f$ is c-hyperbolic on a basic set of saddle type
$\Lambda$. Let us assume moreover that $f$ is $d$-to-1 on
$\Lambda$. Assume that $\Phi^s(y):= \log |Df_s(y)|, y \in
\Lambda$, that $\delta^s$ is the stable dimension of $\Lambda$,
and that $\mu_s$ is the equilibrium measure of the potential
$\delta^s \Phi^s$ on $\Lambda$. Then the conditional measures of
$\mu_s$ associated to the partition $\xi$, namely $\mu_{s, A}^s$,
are geometric probabilities, i.e for every set $A \in \xi$ there
exists a positive constant $C_A$ such that $$C_A^{-1}
\rho^{\delta^s} \le \mu_{s, A}^s(B(y, \rho)) \le C_A
\rho^{\delta^s}, y \in A \cap \Lambda, 0 < \rho < \frac{r(A)}{2}$$
\end{thm}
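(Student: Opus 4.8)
The plan is to reduce the statement to a geometric-measure estimate on local stable manifolds that can be read off from the equilibrium measure $\mu_s$ via Proposition \ref{pieces}, exploiting the conformality of $f$ on stable manifolds and the fact that $\delta^s$ is the zero of $t\mapsto P(t\Phi^s-\log d)$. First I would fix a member $A\in\xi$ with center $x(A)$ and a ball $B(y,\rho)$ with $y\in A\cap\Lambda$ and $0<\rho<r(A)/2$. The natural idea is to pull back $B(y,\rho)$ along local inverse branches: since $f$ is conformal and contracting on $W^s_{loc}$, there is, for each admissible finite prehistory, a local inverse iterate $f^{-n}_{\hat w}$ mapping a piece of some stable manifold onto a neighbourhood of $y$ in $W^s_{r(A)}(x(A))$, distorting lengths by a factor comparable (via the Bounded Distortion Lemma, using Hölder continuity of $\Phi^s$ and Lipschitz continuity of the stable foliation, both cited from \cite{MU-CJM}) to $\exp(S_n\Phi^s)$ evaluated along that prehistory. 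Choosing $n=n(\rho)$ so that $\exp(S_n\Phi^s)\asymp \rho/r(A)$, the set $B(y,\rho)\cap\Lambda$ becomes the image under a single inverse branch of a full-size slice $W^s_{r(A)}(\cdot)\cap\Lambda$ of another stable manifold. The stable dimension theorem then tells us that such full-size slices all carry comparable "normalized" mass, and the factor $\exp(\delta^s S_n\Phi^s)\asymp \rho^{\delta^s}$ comes out as the scaling.

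To turn this into a statement about the conditional measure $\mu^s_{s,A}$ rather than about a geometric outer measure, I would use the essential uniqueness of the family $\{\mu^s_{s,A}\}$ together with condition (ii) in the definition of conditional measures: $\mu_s(B)=\int_{\Lambda/\xi}\mu^s_{s,C}(B\cap C)\,d(\mu_s)_\xi(C)$. The strategy is to build, for a small "cylinder" $B(W^u_r(\hat x),\varepsilon)$ around a local unstable manifold transverse to the stable foliation, two different saturated sets: one saturated along $\xi$ whose fibers are the balls $B(y,\rho)$, and one obtained by pulling the whole cylinder back through the $d^n$ inverse branches of $f^n$. Proposition \ref{pieces} is exactly the tool that compares $\mu_s$ on the various preimage components $A_1,A_2$ of such a borelian set, and with $\phi=\delta^s\Phi^s$ one has $e^{S_k\phi(y_1)}/e^{S_m\phi(y_2)}\cdot P(\phi)^{m-k}$; since $P(\delta^s\Phi^s)=\log d$ (this is what $\delta^s$ being the zero of $t\mapsto P(t\Phi^s-\log d)$ means), the pressure factor $P(\phi)^{m-k}=d^{m-k}$ precisely cancels the combinatorial count of the $d^{m-k}$ preimage pieces. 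Disintegrating $\mu_s$ over these two partitions and re-combining — the "desintegration and successive re-combination" announced in the outline — identifies $\mu^s_{s,A}(B(y,\rho))$, up to a constant depending only on $A$ (through $r(A)$, the distortion constant $C_\varepsilon$ of Proposition \ref{pieces}, and the continuity constant $\chi$ of Remark 1), with $\rho^{\delta^s}$ times the conditional mass of a full slice, which is bounded above and below.

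Concretely the key steps, in order, are: (1) from $y\in A$ choose $n=n(\rho,A)$ and a finite prehistory so that the inverse branch $g:=f^{-n}_{\hat\cdot}$ carries a full slice $W^s_{r(A)}(z)\cap\Lambda$ onto a neighbourhood of $y$ containing $B(y,\rho)\cap\Lambda$ and contained in $B(y,\chi\rho)\cap\Lambda$, with $|Dg|\asymp\rho/r(A)$ by conformality plus bounded distortion; (2) express $\mu_s$ of a $\xi$-saturated cylinder-neighbourhood of $B(y,\rho)$ as an integral of $\mu^s_{s,\cdot}$ over the factor space, and separately push the cylinder back under all $d^n$ branches of $f^n$; (3) apply Proposition \ref{pieces} with $\phi=\delta^s\Phi^s$ to the $d^n$ preimage components, using $P(\delta^s\Phi^s)=\log d$ so the factor $d^{-n}$ from re-combining the branches is matched by $P(\phi)^{-n}$, leaving exactly the distortion factor $e^{\delta^s S_n\Phi^s}\asymp(\rho/r(A))^{\delta^s}$; (4) invoke essential uniqueness of conditional measures to conclude $\mu^s_{s,A}(B(y,\rho))\asymp_A\rho^{\delta^s}$, absorbing $r(A)$ and all distortion/combinatorial constants into $C_A$. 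I expect the main obstacle to be step (1)–(2): the inverse branches of $f^n$ grow in the stable direction, so one must choose the prehistories and the cylinder $B(W^u_r(\hat x),\varepsilon)$ carefully enough that the preimage pieces stay inside single members of $\xi$ and remain pairwise disjoint Bowen-type balls to which Proposition \ref{pieces} legitimately applies — controlling the geometry here, and checking that the re-combination over $d^n$ pieces really reconstructs $\mu_s$ on an open set without double-counting, is where the real work lies.
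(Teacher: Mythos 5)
Your proposal follows essentially the same route as the paper's proof: estimate $\mu_s$ on thin cylinders around local unstable manifolds by disintegrating it over the various preimage components, compare the pieces via Proposition \ref{pieces} with $\phi=\delta^s\Phi^s$ and the identity $P(\delta^s\Phi^s)=\log d$, then invoke the essential uniqueness of the conditional system $\{\mu^s_{s,A}\}$. The ingredients you list — conformality on $W^s_{loc}$, bounded distortion from the H\"older/Lipschitz regularity of the stable foliation, the Bowen-ball estimates for equilibrium states, and the $d$-to-$1$ counting — are exactly the ones the paper uses.

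One point, however, is glossed over in a way that would sink the argument if carried out literally. In steps (2)–(3) you speak of ``pulling the whole cylinder back through the $d^n$ inverse branches of $f^n$'' and then asserting that each branch has distortion factor $e^{\delta^s S_n\Phi^s}\asymp(\rho/r(A))^{\delta^s}$. At a \emph{fixed} depth $n$, different $n$-preimages of $x$ generally have wildly different stable contraction rates $|Df^n_s(x_{-n})|$; bounded distortion controls the variation along a single branch, not across branches. Consequently the $d^n$ preimage components do not all have stable diameter $\asymp\vp$, and Proposition \ref{pieces} applied to them does not yield a single scalar $(\rho/r(A))^{\delta^s}$ after recombination. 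The paper's remedy is to let the depth of the pulled-back branch vary with the prehistory: it introduces the \emph{$\rho$-maximal prehistory} $(x,x_{-1},\dots,x_{-n(\hat x,\rho)})$, the first time along $\hat x$ at which $|Df_s^p(x_{-p})|\cdot\vp$ drops below $\rho$, and sums over the set $\mathcal{N}(x,\rho)$ of such variable depths. This is what makes all the components $A(\hat x,\rho)$ have comparable stable size and lets the factor $d^{p'-p}$ from $P(\phi)^{m-k}$ in Proposition \ref{pieces} cancel against the $d$-ary tree count in the recombination. You do gesture at this when you write ``choose $n=n(\rho)$ so that $\exp(S_n\Phi^s)\asymp\rho/r(A)$'' in step (1), but then revert to a fixed $n$ in the counting; you should carry the branch-dependent depth all the way through the disintegration, and verify (as the paper does using the local injectivity constant $\vp_0$) that the resulting components are pairwise disjoint before summing.
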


\begin{proof}

By using the partition $\xi$ subordinated to local stable
manifolds from above, we can associate conditional measures of
$\mu_s$, denoted by $\mu_{s, A}^s$, $A \in \xi$. We want to
estimate the measure $\mu_{s, A}^s$ of a small arbitrary ball
$B(y, \rho)$ centered at some $y \in A$, where $W^s_{r(A)/2}(x)
\cap \Lambda \subset A \subset W^s_{r(A)}(x) \cap \Lambda, x =
x(A)$.

Let us first consider an arbitrary set $f^n(B_n(z, \vp))$, where
we remind that $B_n(z, \vp)$ denotes a Bowen ball, and where
$\vp>0$ is arbitrary but small. This set is actually a
neighbourhood of the unstable manifold $W^u_r(\hat f^n z)$
corresponding to a prehistory $(f^n z, f^{n-1}z, \ldots, z,
\ldots)$. We will estimate the $\mu_s$-measure of a cross section
of such a set $f^n(B_n(z, \vp))$, i.e an intersection of type
$$B(n, z; k, x; \vp):= f^n(B_n(z, \vp)) \cap B_k(x, \vp),$$ for
arbitrary $z, x \in \Lambda$ and positive integers $n, k$ . We see
that if we vary $z, x, k, n$, we can write any open set in
$\Lambda$, as a union of mutually disjoint sets of type $B(n, z;
k, x; \vp)$.

So let us estimate the $\mu_s$-measure of $B(n, z; k, x, \vp)$.
Notice that $B(n, z; k, x; \vp)$ is contained in $f^{n}(B_{n+k}(z,
\vp))$.  Without loss of generality we can assume that $z =
x_{-n}$, i.e that $z$ itself is the unique $n$-preimage of $x$
inside $B_n(z, \vp)$; if not, then we can replace $z$ by a point
$x_{-n}$ which is $\vp$-shadowed by $z$ up to order $n+k$, and
thus the dynamical behaviour of $z$ up to order $n+k$ will be the
same as that of $x_{-n}$.

Let us denote the positive quantity $|Df_s^n(z)|\cdot \vp$ by
$\rho$.  Since the endomorphism $f$ is conformal on local stable
manifolds, the diameter of the intersection $f^n(B_n(z, \vp)) \cap
W^s_r(f^nz)$ is equal to $2\rho$.

Now recall that we assumed without loss of generality that $f^nz =
x$, and consider all the finite prehistories of the point $x$, in
$\Lambda$. We will call then \textbf{$\rho$-maximal prehistory} of
$x$ any finite prehistory $(x, x_{-1}, \ldots, x_{-p})$ so that
$|Df^p(x_{-p+1})| \cdot \vp \ge \rho$ but $|Df^p(x_{-p})| \cdot
\vp < \rho$. Clearly, given any prehistory $\hat x = (x, x_{-1},
\ldots )$ of $x$, there exists some positive integer $n(\hat x,
\rho)$ such that $(x, x_{-1}, \ldots, x_{-n(\hat x, \rho)})$ is a
$\rho$-maximal prehistory. Let us denote by $$\mathcal{N}(x,
\rho):= \{n(\hat x, \rho), \hat x \ \text{prehistory of} \ x \
\text{from} \ \Lambda\}$$

We will consider now the various components of the $p$-preimages
of $B(n, z; k, x; \vp)$, when $p$ ranges in $\mathcal{N}(x,
\rho)$. We extended the stable diameter of $B(n, z; k, x; \vp)$ in
backward time until we reach a diameter of at most $\vp$. As the
maximum expansion in backward time is realized on the stable
manifolds (local inverse iterates contract all the unstable
directions), it follows that for any prehistory $\hat x$ of $x$,
there exists a component of $f^{-n(\hat x, \rho)}(B(n, z; k, x;
\vp))$ inside the Bowen ball $B_{n(\hat x, \rho)}(x_{-n(\hat x,
\rho)}, \vp)$; denote this component by $A(\hat x, \rho)$.
 We see
that all these components $A(\hat x, \rho)$ are mutually disjoint
if $\vp << \vp_0$, where $\vp_0$ is the local injectivity constant
of $f$ on $\Lambda$ (recall that there are no critical points in
$\Lambda$). Indeed if the sets $A(\hat x, \rho)$ and $A(\hat x',
\rho)$ would intersect for some prehistories $\hat x =(x, x_{-1},
\ldots) , \hat x' = (x, x'_{-1}, \ldots)$ of $x$ then, since they
are contained in Bowen balls, their forward iterates would be
$2\vp$-close. But then we get a contradiction since the
prehistories $\hat x, \hat x'$ must contain different preimages
$x_p, x-p'$ at some level $p$, and these different preimages must
be at a distance of at least $\vp_0$ from each other. Hence either
$A(\hat x, \rho) = A(\hat x', \rho)$, or $A(\hat x, \rho) \cap
A(\hat x', \rho) = \emptyset$.

Now we will use the $f$-invariance of the equilibrium measure
$\mu_s$ in order to estimate the $\mu_s$-measure of the set $B(n,
z; k, x; \vp)$. Recall that $f^n z = x$, and $\vp |Df_s^n(z)| =:
\rho$. Then we have $$\mu_s(B(n, z; k, x; \vp) =
\mathop{\sum}\limits_{\hat x \ \text{prehistory of} \ x}
\mu_s(A(\hat x, \rho)), $$ since we showed above that the sets
$A(\hat x, \rho)$ either coincide or are disjoint.

Now let us take two sets $A(\hat x, \rho), A(\hat x', \rho)$, one
of them with $n(\hat x, \rho) = p$ and the other with $n(\hat x',
\rho) = p'$.  We proved in \cite{MU-CJM} that for a $d$-to-1
c-hyperbolic endomorphism $f$ on the basic set $\Lambda$, we have
$\delta^s = t^s_d$, where $t^s_d$ is the unique zero of the
pressure function $t \to P(t\Phi^s - \log d)$. Therefore we can
use that
\begin{equation}\label{CJM}
P(\delta^s\Phi^s) = \log d
\end{equation}
 Then from the definition of the sets
$A(\hat x, \vp)$ and by using Proposition \ref{pieces}, we can
compare the measure $\mu_s$ on two sets $A(\hat x, \rho), A(\hat
x', \rho)$ as follows:
\begin{equation}\label{comp}
\frac{1}{C_\vp} \mu_s(A(\hat x', \rho))
\frac{|Df_s^p(x_{-p})|^{\delta^s}}{|Df_s^{p'}(x'_{-p'})|^{\delta^s}}
\cdot d^{p'-p} \le \mu_s(A(\hat x, \rho)) \le C_\vp \mu_s(A(\hat
x',
\rho))\frac{|Df_s^p(x_{-p})|^{\delta^s}}{|Df_s^{p'}(x'_{-p'})|^{\delta^s}}
\cdot d^{p'-p}
\end{equation}

In general, if for two variable quantities $Q_1, Q_2$, there
exists a positive universal constant $c$ such that $\frac{1}{c}Q_2
\le Q_1 \le cQ_2$, we say that $Q_1, Q_2$ are \textbf{comparable},
and will denote this by $Q_1 \approx Q_2$; the constant $c$ is
called the \textbf{comparability constant}.

But from the definition of $n(\hat x, \rho)$ above (as being the
length of the $\rho$-maximal prehistory along $\hat x$), and since
$n(\hat x, \rho) = p, n(\hat x', \rho) = p'$ we obtain:
$$\frac{1}{C} |Df_s^{p'}(x'_{-p'})| \le |Df_s^p(x_{-p})| \le C
|Df_s^{p'}(x'_{-p'})|$$ Therefore,  from relation (\ref{comp}) we
obtain
\begin{equation}\label{comp2}
\frac{1}{C_\vp} \mu_s(A(\hat x', \rho))d^{p'-p} \le \mu_s(A(\hat
x, \rho)) \le C_\vp \mu_s(A(\hat x', \rho))d^{p'-p},
\end{equation}
 where we used the same constant $C_\vp$ as in
(\ref{comp}), without loss of generality. Hence the proof will now
be reduced to a combinatorial argument about the different
pieces/components, of the preimages of various orders of $B(n, z;
k, x; \vp)$.

However we assumed that every point from $\Lambda$ has exactly $d$
$f$-preimages inside $\Lambda$. We use (\ref{comp2}) in order to
compare the $\mu_s$-measures of the different pieces $A(\hat x,
\rho)$, which will then be added successively. Recall that one of
these components $A(\hat x, \rho)$ is precisely $B_{n+k}(z, \vp)$.
The comparisons will always be made with respect to this component
$B_{n+k}(z, \vp)$. Let us order the integers from $\mathcal{N}(x,
\rho)$ as: $$n_1 > n_2 >\ldots > n_T$$ We shall add first the
measures $\mu_s(A(\hat x, \rho))$ over all the sets corresponding
to $\hat x$ with $n(\hat x, \rho) = n_1$, then over those
prehistories with $n(\hat x, \rho) = n_2$, etc. And will use that
any point from $\Lambda$ has exactly $d^m$ $m$-preimages belonging
to $\Lambda$ for any $m \ge 1$. \
 Therefore by such successive
additions and by using (\ref{comp2}) we obtain: $$\mu_s(B_{n+k}(z,
\vp)) \cdot d^{n} \le \mu_s(B(n, z; k, x; \vp)) =
\mathop{\sum}\limits_{\hat x \ \text{prehistory of} \ x}
\mu_s(A(\hat x, \rho)) \le \mu_s(B_{n+k}(z, \vp)) \cdot d^{n},$$
with the positive constant $C_\vp$ independent of $n, k, z, x$.

We use now Theorem 1 of \cite{M-DCDS06} which gave estimates for
equilibrium measures on Bowen balls, similar to those from the
case of diffeomorphisms (see \cite{KH} for example); this was done
by lifting to an equilibrium measure on $\hat \Lambda$. Hence from
the last displayed formula and (\ref{CJM}), we obtain:

\begin{equation}\label{tub}
\frac{1}{C_\vp} d^n \cdot
\frac{|Df_s^{n+k}(z)|^{\delta^s}}{d^{n+k}} \le \mu_s(B(n, z; k, x;
\vp)) \le C_\vp d^n \cdot
\frac{|Df_s^{n+k}(z)|^{\delta^s}}{d^{n+k}}
\end{equation}

Let us now study in more detail the conditions from the definition
of conditional measures. From the construction of the measurable
partition $\xi$ we have that $W^s_{r(A)/2}(x) \cap \Lambda \subset
A \subset W^s_{r(A)}(x) \cap \Lambda, x = x(A) \in S$ and the
radii $r(A)$ vary continuously with $A$. So from Remark 1 we can
split an arbitrary set $U \in \mathcal{P}$, modulo $\mu_s$, into a
disjoint union of open sets $V$, each being a $\xi$-set, so there
exists $r = r(V)>0$ s.t for all $A \in \xi$ intersecting $V$, we
have $W^s_{r/2}(x(A)) \cap \Lambda \subset A \subset W^s_r(x(A))
\cap \Lambda$.  Hence locally, on a subset $V \subset U \in
\mathcal{P}$, we can consider that $\xi$ is, modulo a set of
$\mu_s$-measure zero, a foliation with local stable manifolds
$W^s_r(x)$ of the same size $r=r(V)$. The intersections of these
local stable manifolds with $\Lambda$ are then identified with
points in the factor space $\Lambda/\xi$.

We will work for the rest of the proof on an open set $V$ as
above, i.e where the sets $A \in \xi$ can be assumed to be of type
$W^s_r(x)$, of the same size $r=r(V)$. Take also $\vp = r$.

From the definition of the factor space $\Lambda/\xi$, the
$(\mu_s)_\xi$-measure induced on the quotient space $\Lambda/\xi$
is given by $(\mu_s)_\xi(E) = \mu_s(\pi_\xi^{-1}(E))$, where
$\pi_\xi:\Lambda \to \Lambda/\xi$ is the canonical projection
which collapses a set from $\xi$ to a point. \ We notice that the
projection $\pi_\xi(B(n, z; k, x; r))$ in $\Lambda/\xi$ has
$(\mu_s)_\xi$-measure equal to $\mu_s(B_k(x, r))$, since
$\pi_\xi^{-1}(\pi_\xi(B(n, z; k, x; r))$ is $B_k(x, r)$. Now since
$P(\delta^s \Phi^s) = \log d$ (from relation (\ref{CJM})) and by
using again the estimates of equilibrium states on Bowen balls, we
obtain as in (\ref{tub}) that $\mu_s(B_k(x, r))$ is comparable to
$\frac{|Df_s^k(x)|^{\delta^s}}{d^k}$ (with a comparability
constant $c=c(V)$).

But,  from the definition of conditional measures we have

\begin{equation}\label{fam}
\mu_s(B(n, z; k, x; r)) = \int_{B_k(x, r)/\xi} \mu_{s, A}^s(A \cap
B(n, z; k, x; r)) d(\mu_s)_\xi(\pi_\xi(A))
\end{equation}

Now by (\ref{tub}) and recalling that $f^nz =x$, we infer that
$\mu_s(B(n, z; k, x; r))$ is comparable to
$\frac{|Df_s^k(x)|^{\delta^s}}{d^k} \cdot \rho^{\delta^s}$, i.e to
$\mu_s(B_k(x, r)) \cdot \rho^{\delta^s}$, where in our case
$\rho:= |Df_s^n(z)|r$ (the comparability constant being denoted by
$C_V$). And we showed above that $(\mu_s)_\xi(B_k(x, r)/\xi) =
\mu_s(B_k(x, r)) \approx \frac{|Df_s^k(x)|^{\delta^s}}{d^k}$ (with
the comparability constant $C_V$).

In addition, we notice that the sets of type $B(n, z; k, x; r)$
where $n, z, k, x$ vary, form a basis for the open sets in $V$;
also, if we vary $n$, the radius $\rho = |Df_s^n(z)|\cdot r$ can
be made arbitrarily small.  \ Therefore from the essential
uniqueness of the system of conditional measures associated to
$(\mu_s, \xi)$ and since any borelian set in $V$ can be written
modulo $\mu_s$ as a union of disjoint sets of type $B(n, z; k, x;
r)$, we conclude that the conditional measure $\mu_{s, A}^s$ is a
geometric probability of exponent $\delta^s$. Hence for all
$\rho$, $0< \rho < r/2$, we have
 $$\frac{1}{C_V} \rho^{\delta^s} \le
\mu_{s, A}^s(B(y, \rho)) \le C_V \rho^{\delta^s}, y \in A,$$ for
$A \subset V, A \in \xi$. The comparability factor $C_V$ is
constant on $V$; in general it can be taken locally constant on
the complement in $\Lambda$ of a set of $\mu_s$-measure zero. The
proof is thus finished.
\end{proof}

\begin{defn}\label{stable-cond}
Let $f$ be a hyperbolic endomorphism on the folded basic set
$\Lambda$, $\mu$ a borelian probability measure on $\Lambda$ and
$\xi$ a measurable partition subordinated to local stable
manifolds. Then the conditional measure $\mu_A^s$ corresponding to
$A \in \xi$ will be called \textit{the stable conditional measure}
of $\mu$ on $A$. When $\mu = \mu_s$ we denote this stable
conditional measure by $\mu^s_{s, A}$.
\end{defn}

\textbf{Remark 2.} We notice from the proof of Theorem \ref{main}
that, in fact, the stable conditional measures of $\mu_s$ do not
depend on the measurable partition $\xi$ constructed above,
subordinated to local stable manifolds. Therefore there exists a
set $\Lambda(\mu_s)$ of full $\mu_s$-measure inside $\Lambda$,
such that for every $x \in \Lambda(\mu_s)$ there exists some small
$r(x)>0$ so that $W^s_{r(x)}(x)$ is contained in a set $A$ from a
measurable partition of type $\xi$ (subordinated to local stable
manifolds); then one can construct the stable conditional measure
$\mu^s_{s, A}$. We denote this conditional measure also by
$\mu^s_{s, x}, x \in \Lambda(\mu_s)$. $\hfill\square$

We recall now the notions of \textit{lower}, respectively
\textit{upper pointwise dimension} of a finite borelian measure
$\mu$ on a compact space $\Lambda$ (see for example \cite{Ba}).
For $x \in \Lambda$, they are defined by $$ \underline{d}_\mu(x):=
\mathop{\liminf}\limits_{\rho \to 0} \frac{\log \mu(B(x,
\rho))}{\log \rho}, \ \text{and} \ \bar{d}_\mu(x):=
\mathop{\limsup}\limits_{\rho \to 0} \frac{\log \mu(B(x,
\rho))}{\log \rho}$$ If the lower pointwise dimension at $x$
coincides with the upper pointwise dimension at $x$, we denote the
common value by $d_\mu(x)$ and call it simply the
\textit{pointwise dimension} at $x$.

 One can also define the \textit{Hausdorff
dimension, lower box dimension and upper box dimension} of $\mu$
respectively by: $$ HD(\mu):= \inf\{HD(Z), \mu(\Lambda \setminus
Z) = 0\}$$ $$\underline{\text{dim}}_B(\mu):= \lim_{\delta \to 0}
\inf\{\underline{\text{dim}}_B(Z), \mu(\Lambda \setminus Z) \le
\delta\}$$ $$\overline{\text{dim}}_B(\mu):=\lim_{\delta\to 0} \inf
\{\overline{\text{dim}}_B(Z), \mu(\Lambda\setminus Z) \le
\delta\}$$

Assume now in general that $f$ is a hyperbolic endomorphism on
$\Lambda$ and $\mu$ a probability measure on $\Lambda$, and let
$\xi$ be a measurable partition subordinated to local stable
manifolds of $f$ on $\Lambda$. We define then the
\textit{lower/upper stable pointwise dimension} of $\mu$ at $y$,
for $\mu$-a.e $y \in \Lambda$, as the lower/upper pointwise
dimension of the stable conditional measure $\mu^s_A$ at $y$, for
$y \in A$, namely: $$ \underline{d}_\mu^s(y):=
\mathop{\liminf}\limits_{\rho \to 0} \frac{\log \mu_A^s(B(y,
\rho))}{\log \rho} \ \ \text{and} \ \bar{d}_\mu^s(y):=
\mathop{\limsup}\limits_{\rho \to 0} \frac{\log \mu_A^s(B(y,
\rho))}{\log \rho}$$

Similarly we define the \textit{stable Hausdorff dimension} of
$\mu$ on $A \in \xi$, and the \textit{stable lower/upper box
dimension} of $\mu$ on $A$, respectively, as the quantities:
$$HD^s(\mu, A):= HD(\mu_A^s), \  \ \underline{\text{dim}}_B^s(\mu,
A) := \underline{\text{dim}}_B(\mu_A^s), \  \
\overline{\text{dim}}_B^s(\mu, A) :=
\overline{\text{dim}}_B(\mu_A^s), A \in \xi$$

When $\mu = \mu_s$ we denote $HD^s(\mu_s, x) := HD(\mu^s_{s, x})$,
$\underline{\text{dim}}_B^s(\mu_s, x) :=
\underline{\text{dim}}_B(\mu^s_{s, x})$, and
$\overline{\text{dim}}_B^s(\mu_s, x) :=
\overline{\text{dim}}_B(\mu_{s, x}^s)$, for $x \in
\Lambda(\mu_s)$.

Recall now the stable dimension $\delta^s$ from Definition
\ref{stable} and the Theorem of Independence of the Stable
Dimension given afterwards.

\begin{cor}\label{pointwise}
Let $f$ be a c-hyperbolic, $d$-to-1 endomorphism on a basic set
$\Lambda$, and $\mu_s$ be the equilibrium measure of the potential
$\delta^s \Phi^s$. Then the stable pointwise dimension of $\mu_s$
exists $\mu_s$-almost everywhere on $\Lambda$ and is equal to the
stable dimension $\delta^s$.

Also the stable Hausdorff dimension of $\mu_s$, stable lower box
dimension of $\mu_s$ and stable upper box dimension of $\mu_s$ are
all equal to $\delta^s$.
\end{cor}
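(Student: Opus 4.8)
The plan is to derive everything from Theorem \ref{main}, which says that for $\mu_s$-a.e.\ $x\in\Lambda$ the stable conditional measure $\mu_{s,A}^s$ (with $x\in A\in\xi$) satisfies $C_A^{-1}\rho^{\delta^s}\le \mu_{s,A}^s(B(y,\rho))\le C_A\rho^{\delta^s}$ for all $y\in A\cap\Lambda$ and $0<\rho<r(A)/2$. First I would take logarithms: for such $y$,
$$\frac{\log C_A^{-1}+\delta^s\log\rho}{\log\rho}\ \ge\ \frac{\log\mu_{s,A}^s(B(y,\rho))}{\log\rho}\ \ge\ \frac{\log C_A+\delta^s\log\rho}{\log\rho},$$
and since $\log\rho\to-\infty$ as $\rho\to 0^+$ while $\log C_A$ is a fixed constant, both bounds tend to $\delta^s$. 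Hence $\underline d^s_{\mu_s}(y)=\bar d^s_{\mu_s}(y)=\delta^s$, so the stable pointwise dimension exists and equals $\delta^s$ at every point of a set of full $\mu_s$-measure. This gives the first assertion directly.

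For the dimension-of-measure statements, I would use the standard relation between pointwise dimension and the Hausdorff/box dimensions of a measure (see \cite{Ba}): if a finite Borel measure $\nu$ on a metric space satisfies $\underline d_\nu(y)=\bar d_\nu(y)=s$ for $\nu$-a.e.\ $y$, then $HD(\nu)=\underline{\text{dim}}_B(\nu)=\overline{\text{dim}}_B(\nu)=s$. Applying this to $\nu=\mu_{s,A}^s$ on $A\in\xi$, with $s=\delta^s$, yields $HD^s(\mu_s,A)=\underline{\text{dim}}_B^s(\mu_s,A)=\overline{\text{dim}}_B^s(\mu_s,A)=\delta^s$ for $\mu_s$-a.e.\ $A$ (equivalently, $HD^s(\mu_s,x)=\underline{\text{dim}}_B^s(\mu_s,x)=\overline{\text{dim}}_B^s(\mu_s,x)=\delta^s$ for $x\in\Lambda(\mu_s)$). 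One only needs to check that $\mu_{s,A}^s$ is a genuine finite Borel measure on the Lebesgue space $(A,\mu_{s,A}^s)$, which is part of the definition of conditional measures recalled before Definition \ref{sub}, and that the two-sided geometric bound from Theorem \ref{main} holds on a full-measure set of $A$'s — both already established.

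The only mildly delicate point, and the one I would state carefully, is the passage from the per-fiber statement to a statement phrased for the measure on $\Lambda$: one must note that the bound in Theorem \ref{main} holds uniformly for \emph{all} $y\in A\cap\Lambda$ (not merely $\mu_{s,A}^s$-a.e.), with a constant $C_A$ that by Remark 1 can be taken locally constant off a $\mu_s$-null set, so there is no measurability obstruction in integrating the fiberwise conclusion against $(\mu_s)_\xi$. I do not expect a serious obstacle here; the content is entirely in Theorem \ref{main}, and Corollary \ref{pointwise} is essentially its logarithmic reformulation combined with the textbook fact that exact-dimensional measures have coinciding Hausdorff and box dimensions.
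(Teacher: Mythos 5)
Your proposal is correct and takes essentially the same approach as the paper: both derive the pointwise statement by taking logarithms of the two-sided geometric bound from Theorem \ref{main}, and both invoke Theorem 2.1.6 of \cite{Ba} to pass from exact pointwise dimension to the coincidence of the Hausdorff and box dimensions of the conditional measures. The only difference is that you spell out the log-ratio computation and the measurability remark explicitly, whereas the paper leaves them implicit.
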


\begin{proof}
The proof follows from Theorem \ref{main} since we proved that the
stable conditional measures of the equilibrium measure $\mu_s$ are
geometric probabilities.

For the second part of the Corollary, we use Theorem 2.1.6 of
\cite{Ba}. Indeed since the stable conditional measures of $\mu_s$
are geometric probabilities of exponent $\delta^s$, we conclude
that the stable Hausdorff, lower/upper dimensions coincide, and
are all equal to the stable dimension $\delta^s$.
\end{proof}

\begin{defn}\label{msd}
We will say that a measure $\mu$ on $\Lambda$ has \textit{maximal
stable dimension} on $A \in \xi, A \subset W^s_{r(x)}(x)$ if:
$$HD^s(\mu, A) = \sup\{HD^s(\nu, A), \nu \ \text{is an} \
f|_\Lambda-\text{invariant probability measure} \ \text{on} \
\Lambda\}$$
\end{defn}

This definition is similar to that of measure of maximal
dimension; see \cite{Ba}, \cite{BW} where measures of maximal
dimension on hyperbolic sets of surface diffeomorphisms were
studied. Our setting/methods for the maximal \textit{stable}
dimension in the non-invertible case, are however different. \

Now, since the stable Hausdorff dimension of any $f$-invariant
probability measure $\nu$ on $\Lambda$ is bounded above by
$\delta^s := HD(W^s_r(x) \cap \Lambda)$, we see from Corollary
\ref{pointwise} that:

\begin{cor}\label{maximal}
In the setting of Theorem \ref{main} it follows that the stable
equilibrium measure $\mu_s$ of $f$, is of \textbf{maximal stable
dimension} on $W^s_{r(x)}(x) \cap \Lambda$ among all $f$-invariant
probability measures on $\Lambda$, for $\mu_s$-a.e $x \in
\Lambda$. And $\mu_s$ maximizes in a Variational Principle for
stable dimension on $\Lambda$, i.e: $$\delta^s = HD^s(\mu_s, x) =
\sup\{HD^s(\nu, x),  \ \nu \ \text{is an} \
f|_\Lambda-\text{invariant probability measure} \ \text{on} \
\Lambda\}, \mu_s-a.e \ x$$
\end{cor}

 We say now that the basic set $\Lambda$ is a
\textbf{repellor} (or \textbf{folded repellor}) if there exists a
neighbourhood $U$ of $\Lambda$ such that $\bar U \subset f(U)$.
And that $\Lambda$ is a \textit{local repellor} if there are local
stable manifolds of $f$ contained inside $\Lambda$ (see
\cite{M-Cam} for more on these notions in the case of
endomorphisms).

\begin{cor}\label{absolute}
Let an open c-hyperbolic endomorphism $f$ on a connected basic set
$\Lambda$. Then we have that the stable conditional measures
$\mu_{s, x}^s$ of $\mu_s$, are absolutely continuous with respect
to the induced Lebesgue measures on $W^s_{r(x)}(x), x \in
\Lambda(\mu_s)$, if and only if $\Lambda$ is a non-invertible
repellor.
\end{cor}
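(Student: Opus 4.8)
The plan is to show that absolute continuity of the stable conditional measures is equivalent to the condition $\delta^s = \dim E^s_x$, and then to identify this with $\Lambda$ being a non-invertible repellor. I would organize the argument into two implications plus a geometric identification step.

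First I would establish that absolute continuity of $\mu_{s,x}^s$ with respect to the induced Lebesgue measure on $W^s_{r(x)}(x)$ forces $\delta^s = \dim E^s_x =: u$. By Theorem \ref{main}, the stable conditional measure $\mu_{s,x}^s$ is a geometric probability of exponent $\delta^s$ on $W^s_{r(x)}(x) \cap \Lambda$, so it is supported on a set of Hausdorff dimension $\delta^s$. If this measure were absolutely continuous with respect to the $u$-dimensional Lebesgue measure on the local stable manifold, its support would have to carry positive Lebesgue measure, hence Hausdorff dimension $u$; combined with the upper bound $\delta^s \le u$ (the local stable manifold has real dimension $u$), this forces $\delta^s = u$ and moreover $W^s_r(x) \cap \Lambda$ has positive $u$-dimensional Lebesgue measure. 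Conversely, if $\delta^s = u$, I would argue from the geometric probability estimate $C_V^{-1}\rho^{u} \le \mu_{s,x}^s(B(y,\rho)) \le C_V \rho^{u}$ together with a Vitali/density argument that $\mu_{s,x}^s$ is comparable to (and in particular absolutely continuous with respect to) the $u$-dimensional Lebesgue measure restricted to the support; here one uses that on the local stable manifold, which is a $u$-dimensional $C^2$ submanifold, the induced Riemannian measure of a ball of radius $\rho$ is itself comparable to $\rho^u$, so the two measures have bounded Radon–Nikodym derivative.

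The remaining step is the identification: $\delta^s = \dim E^s_x$ if and only if $\Lambda$ is a non-invertible repellor. For the "if" direction, if $\Lambda$ is a folded repellor, then (using the notion of local repellor and the fact that $f$ is open, so by the Proposition cited from \cite{M-Cam}, \cite{MU-CJM} the number of preimages in $\Lambda$ is constant) the local stable manifolds are actually contained in $\Lambda$, so $W^s_r(x) \cap \Lambda = W^s_r(x)$ has full dimension $u$, giving $\delta^s = u$. For the "only if" direction, if $\delta^s = u$ then by the first part $W^s_r(x) \cap \Lambda$ has positive $u$-dimensional Lebesgue measure inside the local stable manifold; I would then use topological transitivity and the open mapping property on the connected basic set $\Lambda$, together with invariance, to propagate this positivity and upgrade it to the statement that an entire local stable manifold lies in $\Lambda$ (a Lebesgue density point argument along the stable direction, combined with the hyperbolic contraction which pulls a positive-measure piece toward full local stable manifolds under forward iteration). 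Having local stable manifolds inside $\Lambda$ means $\Lambda$ is a local repellor, and then the openness of $f$ on the connected set $\Lambda$ allows one to conclude $\bar U \subset f(U)$ for a suitable neighbourhood $U$, i.e., $\Lambda$ is a non-invertible repellor; that $f$ is genuinely non-invertible (rather than an Anosov diffeomorphism) is automatic since $f$ is $d$-to-$1$ with $d$ forced by the saddle structure and the hypothesis that $\Lambda$ is of saddle type precludes $\Lambda = M$.

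The main obstacle I expect is the "only if" direction of the geometric identification: going from "$W^s_r(x) \cap \Lambda$ has positive $u$-dimensional Lebesgue measure" to "a full local stable manifold is contained in $\Lambda$." This requires carefully combining a Lebesgue density argument in the stable direction with the contraction of $f$ along stable manifolds and the invariance $\Lambda = \cap_n f^n(U)$, and then using topological transitivity to spread the conclusion across all of $\Lambda$; one must be careful that the stable holonomy is not available here (there may be many unstable manifolds through a point), so the argument has to stay within a single stable leaf and use only the dynamics along it. The rest — the density/Vitali comparison of $\mu_{s,x}^s$ with Lebesgue measure, and the passage from local repellor to folded repellor via openness — is more routine given the geometric probability estimates from Theorem \ref{main} and the cited results.
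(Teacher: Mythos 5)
Your proposal follows the same logical skeleton as the paper: absolute continuity is reduced to $\delta^s = \dim E^s_x =: d_s$, then $\delta^s = d_s$ is identified with some local stable manifold lying inside $\Lambda$ (i.e.\ $\Lambda$ is a local repellor), which under openness on a connected $\Lambda$ is upgraded to $\Lambda$ being a repellor. The substantive difference is in how the two hard links of this chain are handled. The paper dispatches both by citation: the implication ``no local stable leaf in $\Lambda$ $\Rightarrow$ $\delta^s < d_s$'' is Theorem 1 of \cite{M-Cam}, and ``local repellor $\iff$ repellor, under openness on a connected $\Lambda$'' is Proposition 1 of \cite{M-Cam}. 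You instead sketch a self-contained density-point argument for the first link: extract positive $d_s$-dimensional Lebesgue measure of $W^s_r(x)\cap\Lambda$ in the leaf from the geometric-probability estimate of Theorem \ref{main}, pick a Lebesgue density point, push forward by $f^n$ so the stable contraction blows the density up toward $1$ at a fixed scale, extract a compactness limit $z$, and use closedness of $\Lambda$ to conclude $W^s_{r'}(z)\subset\Lambda$. This is a reasonable alternative to the citation, and you correctly flag it as the crux; to make it complete you would need the bounded-distortion estimate keeping the density under control along the orbit, and a short argument that the density-one property survives passage to the compactness limit (via continuity of local stable manifolds plus closedness of $\Lambda$).

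Two smaller points. First, the second link (local repellor $\Rightarrow\bar U\subset f(U)$) is gestured at through openness and connectedness but not substantiated; this is exactly Proposition 1 of \cite{M-Cam} and does not come for free, so it is another place where your ``proof'' is really a citation in disguise. Second, your aside justifying the word ``non-invertible'' --- that saddle type ``precludes $\Lambda=M$'' and hence forces $d\ge 2$ --- does not hold up; in this paper ``non-invertible repellor'' is simply synonymous with ``folded repellor'' (a repellor in the endomorphism setting), and the proof neither needs nor attempts a separate claim that $d\ge 2$. Your Vitali/density comparison showing ``absolute continuity $\iff\delta^s=d_s$'' from the geometric-probability estimate is correct and matches what the paper uses implicitly via Theorem \ref{main}.
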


\begin{proof}
If $f$ is open on a connected $\Lambda$ we saw in Section 1 that
$f$ is constant-to-1 on $\Lambda$.

 The first part of the proof follows
from Theorem \ref{main} and from Theorem 1 of \cite{M-Cam}. Indeed
in \cite{M-Cam} we showed that in the above setting, if none of
the stable manifolds centered at $x$ is contained in $\Lambda$,
then $\delta^s$ is strictly less than the real dimension $d_s$ of
the manifold $W^s_{r(x)}(x)$ (the result in \cite{M-Cam}, given
for the case when $d_s$ is 2, can be generalized easily to other
dimensions as long as the condition of conformality on stable
manifolds is satisfied).  Thus in order to have absolute
continuity of the stable conditional measures we must have some
local stable manifolds contained in $\Lambda$, equivalent to
$\Lambda$ being a local repellor (in the terminology of
\cite{M-Cam}). But we proved in Proposition 1 of \cite{M-Cam} that
when $f|_\Lambda: \Lambda \to \Lambda$ is open, then $\Lambda$ is
a local repellor if and only if $\Lambda$ is a repellor.

The converse is clearly true since, if $\Lambda$ is a repellor,
then the local stable manifolds are contained inside $\Lambda$,
and thus the stable dimension $\delta^s$ is equal to the dimension
$d_s$ of the manifold $W^s_{r(x)}(x)$. Hence from Theorem
\ref{main} it follows that the stable conditional measures of
$\mu_s$ are geometric of exponent $d_s$; thus they are absolutely
continuous with respect to the respective induced Lebesgue
measures.
\end{proof}

Let us give in the end some examples of c-hyperbolic endomorphisms
which are constant-to-1 on basic sets, for which we will apply
Theorem \ref{main} and its Corollaries.

 \textbf{Example 1.} The first and simplest example is that of a product $$f(z, w) =
(f_1(z), f_2(w)),(z, w) \in \mathbb C^2$$ where $f_1$ has a fixed
attracting point $p$ and $f_2$ is expanding on a compact invariant
set $J$. Then the basic set that we consider is $\Lambda:=
\{p\}\times J$. For instance take $f(z, w) = (z^2+c, w^2), c \ne
0$, $|c|$ small, on the basic set $\Lambda = \{p_c\}\times S^1$,
where $p_c$ denotes the unique fixed attracting point of $z \to
z^2+c$. The stable dimension here is equal to zero and the
intersections of type $W^s_r(x) \cap \Lambda$ are singletons.

\textbf{Example 2.} We can take a hyperbolic toral endomorphism
$f_A$ on $\mathbb T^2$, where $A$ is an integer-valued matrix with
one eigenvalue of absolute values strictly less than 1, and
another eigenvalue of absolute value strictly larger than 1. In
this case we can take $\Lambda = \mathbb T^2$, and we have the
stable dimension equal to 1. We see that $f_A$ is
$|\text{det}(A)|$-to-1 on $\mathbb T^2$.

We may take also $f_{A, \vp}$ a perturbation of $f_A$ on $\mathbb
T^2$. Then again $f_{A, \vp}$ is $|\text{det}(A)|$-to-1 on
$\mathbb T^2$, and c-hyperbolic on $\mathbb T^2$. The stable
dimension is equal to 1, but the stable potential $\Phi^s$ is not
necessarily constant now. From Corollary \ref{absolute} we see
that the stable conditional measures of the equilibrium measure
$\mu_s$ are absolutely continuous.

\textbf{Example 3.} We construct now examples of folded repellors
which are not necessarily Anosov endomorphisms.

We remark first that if $\Lambda$ is a repellor for an
endomorphism $f$, with neighbourhood $U$ so that $\bar U \subset
f(U)$, then $f^{-1}(\Lambda) \cap U = \Lambda$. Therefore if
$\Lambda$ is in addition connected, it follows easily that $f$ is
constant-to-1 on $\Lambda$.  \  \ \ Let us show now that
constant-to-1 repellors are stable under perturbations.

\begin{pro}\label{stability}
Let $\Lambda$ be a connected repellor for an endomorphism $f$ so
that $f$ is hyperbolic on $\Lambda$, and let a perturbation
$f_\vp$ which is  $\mathcal{C}^1$-close to $f$. Then $f_\vp$ has a
connected repellor $\Lambda_\vp$ close to $\Lambda$, and such that
$f_\vp$ is hyperbolic on $\Lambda_\vp$. Moreover for any $x \in
\Lambda_\vp$,  the number of $f_\vp$-preimages of $x$ belonging to
$\Lambda_\vp$, is the same as the number of $f$-preimages in
$\Lambda$ of a point from $\Lambda$.
\end{pro}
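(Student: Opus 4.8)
The plan is to use the standard persistence of hyperbolic invariant sets, but to carry it out on the inverse limit so that the non-invertible difficulties (multiple prehistories, non-foliated unstable manifolds) are handled by working with homeomorphisms. First I would set $U$ to be a neighbourhood of $\Lambda$ with $\bar U \subset f(U)$ and $\Lambda = \bigcap_{n\ge 0} f^n(U) = \bigcap_{n\in\mathbb Z} f^n(U)$ (the last equality holding since, by the remark preceding the statement, $f^{-1}(\Lambda)\cap U = \Lambda$, so backward intersections add nothing new). For $f_\vp$ sufficiently $\mathcal C^1$-close to $f$ one still has $\bar U \subset f_\vp(U)$, because the condition $\bar U \subset f(U)$ is open in the $\mathcal C^0$ (hence $\mathcal C^1$) topology; I would then define $\Lambda_\vp := \bigcap_{n\ge 0} f_\vp^n(U)$. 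This is automatically compact, $f_\vp$-forward-invariant, and it is a repellor for $f_\vp$ with the same neighbourhood $U$.

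The next step is hyperbolicity and the ``closeness'' of $\Lambda_\vp$ to $\Lambda$. For this I would pass to inverse limits: let $\hat\Lambda$, $\hat\Lambda_\vp$ be the spaces of full prehistories in $U$, with shift homeomorphisms $\hat f$, $\hat f_\vp$. On $\hat\Lambda$ the map $\hat f$ is a hyperbolic homeomorphism in the usual sense (the unstable bundle $E^u_{\hat x}$ is well-defined on prehistories, the stable bundle $E^s_x$ pulls back), and $\hat f_\vp$ is $\mathcal C^0$-close to $\hat f$ on $\prod U$. The classical structural-stability / persistence theorem for hyperbolic sets of homeomorphisms (as in \cite{KH}, applied on the inverse limit exactly as is done elsewhere in this paper) gives a hyperbolic set $\hat\Lambda_\vp$ for $\hat f_\vp$, Hausdorff-close to $\hat\Lambda$, together with a conjugating homeomorphism $\hat h_\vp:\hat\Lambda\to\hat\Lambda_\vp$ close to the identity. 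Projecting to the $0$-th coordinate yields $\Lambda_\vp$, and the hyperbolic splitting on $\hat\Lambda_\vp$ descends to the stable splitting $E^s$ over $\Lambda_\vp$ and the prehistory-dependent unstable splitting $E^u$ over $\hat\Lambda_\vp$, i.e. $f_\vp$ is hyperbolic on $\Lambda_\vp$ in the sense of Definition \ref{stable}; topological transitivity of $f_\vp|_{\Lambda_\vp}$ follows from that of $f|_\Lambda$ via $\hat h_\vp$. Connectedness of $\Lambda_\vp$ I would obtain from connectedness of $\Lambda$ together with the fact that $\Lambda_\vp$ is the nested intersection of the connected (for small $\vp$) sets $f_\vp^n(U)$; alternatively it follows from the continuity of the set-valued map $f\mapsto \bigcap_n f^n(U)$ in the Hausdorff metric and the stability of connectedness under Hausdorff limits of a decreasing sequence of compacta.

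Finally, the statement about the number of preimages. Since $\Lambda$ is a connected repellor, the remark before the Proposition (combined with the Proposition of \cite{M-Cam}, \cite{MU-CJM} quoted earlier) gives that $f|_\Lambda$ is exactly $d$-to-$1$ for some fixed $d$; equivalently, each $x\in\Lambda$ has exactly $d$ preimages in $U$, and these preimages all lie in $\Lambda$ because $f^{-1}(\Lambda)\cap U=\Lambda$. The map $x\mapsto f^{-1}(x)\cap U$ varies continuously (the critical set is disjoint from $\Lambda$, so locally $f$ is a finite-sheeted covering over a neighbourhood of $\Lambda$); hence for $f_\vp$ $\mathcal C^1$-close to $f$ and $U$ shrunk slightly if necessary, each point of $U$ has exactly $d$ $f_\vp$-preimages in $U$, and again $f_\vp^{-1}(\Lambda_\vp)\cap U=\Lambda_\vp$ forces all $d$ of them to lie in $\Lambda_\vp$ when $x\in\Lambda_\vp$. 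Thus $f_\vp$ is $d$-to-$1$ on $\Lambda_\vp$, with the same $d$. The main obstacle, and the point I would be most careful about, is the identity $f_\vp^{-1}(\Lambda_\vp)\cap U=\Lambda_\vp$ together with the uniform lower bound on the local injectivity/covering radius: one must check that the number ``$d$ preimages in $U$'' is genuinely locally constant and survives the perturbation before concluding it equals the intrinsic count of $f_\vp$-preimages in $\Lambda_\vp$; everything else is a routine application of persistence of hyperbolic sets on the inverse limit.
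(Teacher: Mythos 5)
Your overall strategy (pass to the inverse limit where $\hat f$ is a hyperbolic homeomorphism, invoke persistence of hyperbolic sets to get a conjugacy $\hat h_\vp$, then count $f_\vp$-preimages in $U$ by $\mathcal C^1$-closeness of local inverse branches and show they stay in $\Lambda_\vp$ because $\bar U\subset f_\vp(U)$) is the same as the paper's, and your preimage-counting step is essentially identical. However, there is a real error at the very start that then propagates.

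You define $\Lambda_\vp:=\bigcap_{n\ge 0} f_\vp^n(U)$ and assert $\Lambda=\bigcap_{n\ge 0}f^n(U)=\bigcap_{n\in\mathbb Z}f^n(U)$. The sign is backwards: since $\bar U\subset f(U)$ forces $U\subset f(U)\subset f^2(U)\subset\cdots$, the sets $f^n(U)$ for $n\ge 0$ form an \emph{increasing} chain whose intersection is just $f^0(U)=U$, so $\bigcap_{n\ge 0}f^n(U)=U$ is open, not compact, and certainly not $\Lambda$. For a repellor it is the \emph{forward} intersections that add nothing new; the set is captured by the negative iterates, $\Lambda=\bigcap_{n\ge 0}f^{-n}(U)=\bigcap_{n\in\mathbb Z}f^n(U)$, and likewise $\Lambda_\vp$ must be $\bigcap_{n\in\mathbb Z}f_\vp^n(U)$ (as the paper writes). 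This is not merely notational: your first suggested route for connectedness, ``nested intersection of the connected sets $f_\vp^n(U)$,'' only looks plausible because of the sign error. Once corrected, the relevant decreasing family is $U\cap f_\vp^{-1}(U)\cap\cdots\cap f_\vp^{-n}(U)$, and these sets need not be connected (preimages can split into several components), so that argument does not go through. Your second suggestion, continuity in the Hausdorff metric plus stability of connectedness under decreasing intersections, has the same problem (the family is not obviously decreasing and not obviously connected). The paper instead derives connectedness from the conjugacy $H:\hat\Lambda\to\hat\Lambda_\vp$ you already have on the inverse limit, together with the fact that $\hat\Lambda$ is connected if and only if $\Lambda$ is; since you already invoked $\hat h_\vp$, you should use it for connectedness as well rather than the nested-intersection argument. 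With the definition of $\Lambda_\vp$ corrected and connectedness routed through the conjugacy, the rest of your proof matches the paper's.
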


\begin{proof}

Since $\Lambda$ has a neighbourhood $U$ so that $\bar U \subset
f(U)$, it follows that for $f_\vp$ close enough to $f$, we will
obtain $\bar U \subset f_\vp(U)$. If $f_\vp$ is
$\mathcal{C}^1$-close to $f$, then we can take the set $
\Lambda_\vp:= \mathop{\cap}\limits_{n \in \mathbb Z} f_\vp^n(U)$,
and it is quite well-known that $f_\vp$ is hyperbolic on
$\Lambda_\vp$ (for example \cite{Ru-carte},  etc.)

We know that there exists a conjugating homeomorphism $H: \hat
\Lambda \to \hat \Lambda_\vp$ which commutes with $\hat f$ and
$\hat f_\vp$. The natural extension $\hat \Lambda$ is connected
iff $\Lambda$ is connected. Hence $\hat \Lambda_\vp$ is connected
and so $\Lambda_\vp$ is also connected. Moreover since $\bar U
\subset f_\vp(U)$, we obtain that $\Lambda_\vp$ is a connected
repellor for $f_\vp$.

Now assume that $x \in \Lambda$ has $d$ $f$-preimages in
$\Lambda$. Then if $C_f \cap \Lambda = \emptyset$ and if $f_\vp$
is $\mathcal{C}^1$-close enough to $f$, it follows that the local
inverse branches of $f_\vp$ are close to the local inverse
branches of $f$ near $\Lambda$. Therefore any point $y \in
\Lambda_\vp$ has exactly $d$ $f_\vp$-preimages in $U$, denoted by
$y_1, \ldots, y_d$. Any of these $f_\vp$-preimages from $U$ has
also an $f_\vp$-preimage in $U$ since $\bar U \subset f_\vp(U)$,
etc. Thus $y_i \in \Lambda_\vp = \mathop{\cap}\limits_{n \in
\mathbb Z} f_\vp^n(U), i = 1, \ldots, d$; \ hence any point $y \in
\Lambda_\vp$ has exactly $d$ $f_\vp$-preimages belonging to the
repellor $\Lambda_\vp$.
\end{proof}

 Let us now take the
hyperbolic toral endomorphism $f_A$ from Example 2, and the
product $f(z, w) = (z^k, f_A(w)), (z, w) \in \mathbb P^1\mathbb C
\times \mathbb T^2$, for some fixed $k \ge 2$. \ And consider a
$\mathcal{C}^1$-\textbf{perturbation} $f_\vp$ of $f$ on $\mathbb
P^1 \mathbb C \times \mathbb T^2$. \ \  Since $f$ is c-hyperbolic
on its connected repellor $\Lambda:= S^1 \times \mathbb T^2$, it
follows from Proposition \ref{stability} that the perturbation
$f_\vp$ also has a connected folded repellor $\Lambda_\vp$, on
which it is c-hyperbolic. Also it follows from above that $f_\vp$
is constant-to-1 on $\Lambda_\vp$, namely it is
$(k+|\text{det}(A)|)$-to-1. The stable dimension $\delta^s(f_\vp)$
of $f_\vp$ on $\Lambda_\vp$ is equal to 1 in this case. We can
form the stable potential of $f_\vp$, namely $\Phi^s(f_\vp)(z, w)
:= \log |D(f_\vp)_s|(z, w), (z, w) \in \Lambda_\vp$, and the
equilibrium measure $\mu_s(f_\vp)$ of $\delta^s(f_\vp) \cdot
\Phi^s(f_\vp)$, like in Theorem \ref{main}. Since the basic set
$\Lambda_\vp$ is a repellor, we obtain from Corollary
\ref{absolute} that the stable conditional measures of
$\mu_s(f_\vp)$ are absolutely continuous on the local stable
manifolds of $f_\vp$ (which in general are non-linear
submanifolds).

One actual example can be constructed by the above procedure, if
we consider first the linear toral endomorphism $f_A(w) =
(3w_1+2w_2, 2w_1+2w_2), w = (w_1, w_2) \in \mathbb R^2/\mathbb
Z^2$. The associated matrix $A$ has one eigenvalue of absolute
value less than 1 and the other eigenvalue larger than 1, hence
$f_A$ is hyperbolic on $\mathbb T^2$. And as above we can take the
product $f(z, w) = (z^k, f_A(w))$ for some $k \ge 2$. \ Then we
consider the perturbation endomorphism: $$f_\vp(z, w) := (z^k,
3w_1+2w_2+\vp sin(2\pi(w_1+5w_2)), 2w_1+2w_2+\vp cos(2\pi w_2)
+\vp sin^2(\pi(w_1-2w_2))),$$ defined for $z \in \mathbb P^1
\mathbb C, w \in \mathbb T^2$.
 We see that $f_\vp$ is
well defined as an endomorphism on $\mathbb P^1 \mathbb C \times
\mathbb T^2$ and that it has a repellor $\Lambda_\vp$ close to
$S^1 \times \mathbb T^2$, given by Proposition \ref{stability};
namely there exists a neighbourhood $U$ of $S^1 \times \mathbb
T^2$ so that $$\Lambda_\vp = \mathop{\cap}\limits_{n \in \mathbb
Z} f_\vp^n(U)$$
 Then $f_\vp$ is c-hyperbolic on $\Lambda_\vp$ (see
Definition \ref{stable}) and it is $(k+2)$-to-1 on $\Lambda_\vp$.
The stable potential $\Phi^s(f_\vp)$ is not necessarily constant
in this case. We obtain as before that the stable conditional
measures of $\mu_s(f_\vp)$ are absolutely continuous, and that the
stable pointwise dimension of $\mu_s(f_\vp)$ is essentially equal
to 1, on $\mu_s$-a.a local stable manifolds over $\Lambda_\vp$.
$\hfill\square$

 \textbf{Acknowledgements:} Partial support
for this work was provided by PN II Project ID-1191.

\textbf{Email:}  Eugen.Mihailescu\@@imar.ro

Institute of Mathematics of the Romanian Academy, P. O. Box 1-764,
RO 014700, Bucharest, Romania.

 Webpage: www.imar.ro/$\sim$mihailes

\end{document}